\numberwithin{equation}{section}
\newcommand{\tal}{\tilde\alpha}
\newcommand{\e}{\epsilon}
\newcommand{\ga}{\gamma}
\newcommand{\br}{\mathbb{R}}
\newcommand{\ik}{\varphi}
\newcommand{\pa}{\partial}
\newcommand{\al}{\alpha}
\newcommand{\avi}{\alpha_{\vec i}}
\newcommand{\om}{\omega}
\newcommand{\de}{\delta}
\newcommand{\fj}{\rho_0}
\newcommand{\xh}{x_h}
\newcommand{\be}{\begin{equation}}
\newcommand{\ee}{\end{equation}}
\newcommand{\kod}{\varphi}
\newcommand{\ts}{\text{supp}}
\newcommand{\mHo}{{\mathcal H}_0}
\newcommand{\dq}{\Delta t_1}
\newcommand{\cp}{\check p}
\newcommand{\dt}{\text{det}}
\newtheorem{theorem}{Theorem}
\newtheorem{lemma}{Lemma}
\newtheorem{corollary}{Corollary}
\newtheorem{definition}{Definition}
\begin{document}

\title[Reconstruction from discrete data in $\br^3$]{Analysis of reconstruction from discrete Radon transform data in $\br^3$ when the function has jump discontinuities}
\author[A Katsevich]{Alexander Katsevich$^1$}
\thanks{$^1$Department of Mathematics, University of Central Florida, Orlando, FL 32816.\\ 
This work was supported in part by NSF grant DMS-1615124.}

\begin{abstract}
In this paper we study reconstruction of a function $f$ from its discrete Radon transform data in $\br^3$ when $f$ has jump discontinuities.  
Consider a conventional parametrization of the Radon data in terms of the affine and angular variables. The step-size along the affine variable is $\e$, and the density of measured directions on the unit sphere is $O(\e^2)$. Let $f_\e$ denote the result of reconstruction from the discrete data. Pick any generic point $x_0$ (i.e., satisfying some mild conditions), where $f$ has a jump. Our first result is an explicit leading term behavior of $f_\e$ in an $O(\e)$-neighborhood of $x_0$ as $\e\to0$. 

A closely related question is why can we accurately reconstruct functions with discontinuities at all? This is a fundamental question, which has not been studied in the literature in dimensions three and higher. We prove that the discrete inversion formula ``works'', i.e. if $x_0\not\in S:=\text{singsupp}(f)$ is generic, then $f_\e(x_0)\to f(x_0)$ as $\e\to0$. The proof of this result reveals a surprising connection with the theory of uniform distribution (u.d.). This is a new phenomenon that has not been known previously. We also present some numerical experiments, which confirm the validity of the developed theory.
\end{abstract}
\maketitle

\section{Introduction}\label{sec_intro}

In this paper we consider the question: ``Why and how well does tomographic reconstruction from discrete data work?'' The answer is known if the function $f$ to be reconstructed is sufficiently smooth. Related problems are usually studied by sampling theory, where the goal is to estimate how closely the reconstructed function $f_\e$ approximates $f$ in some global norm \cite{kr89, des93, nat93, nat95, pal95, cb97, fr00, far04,  rs07, izen12}. Usually, $f$ is required to be essentially bandlimited, which imposes a smoothness requirement on $f$. In the case when $f$ is not smooth (e.g. has jump discontinuities) the question has not been studied much. It consists of two parts:
\begin{itemize}
\item[Q1.] What does reconstruction look like near the singularities of $f$?
\item[Q2.] What is the effect of ``remote'' singularities, i.e. the part of $\text{singsupp}(f)$ located at a distance from the reconstruction point? 
\end{itemize}

Frequently, one is less interested in how $f_\e$ approximates $f$ in some global norm. Instead, one would like to know how accurately and with what resolution the singularities of $f$ are reconstructed. Convergence of reconstruction algorithms in the case of objects with discontinuities has been studied as well \cite{gonchar86,  popov90, pal93, popov98}. However, in these works the discontinuities of the object are a complicating factor rather than the object of the study. The first paper focusing specifically on the behavior of $f_\e$ near a jump discontinuity of $f$ is \cite{kat_2017}, where the author considers inversion of the Radon transform in $\br^2$ in the parallel beam setting. The object can be static or change with time. The parametrization of the data is conventional, i.e. in terms of the affine and angular variables. Suppose the step-sizes along the angular and affine variables are $O(\e)$. Let $f_\e$ denote the result of reconstruction from the descrete data. Pick a point $x_0$, where $f$ has a jump. We suppose that $x_0$ is {\it generic}, i.e. it satisfies some mild conditions. The result of \cite{kat_2017} is an explicit leading term behavior of $f_\e$ in an $O(\e)$-neighborhood of $x_0$ as $\e\to0$. The obtained behavior, which we call {\it transition behavior} or, equivalently, {\it edge response} provides the desired resolution of the reconstruction algorithm. No such results were known in dimensions greater than two. In this paper we obtain the edge response in $\br^3$.

A closely related question is why can we accurately reconstruct functions with discontinuities at all? This is a fundamental question, which has not been studied in the literature in dimensions three and higher. The answer to the question is complicated. In $\mathbb R^2$, convergence of reconstruction algorithms in the case of objects with discontinuities has been studied in   \cite{gonchar86,  pal90, popov90, popov98}. In this paper we answer the question in $\br^3$. The key reason why reconstruction works comes from a surprising connection with the theory of uniform distribution (u.d.). This is a completely new phenomenon, which has not been noticed previously. The first hint of such a connection appeared in \cite{kat_2017}, where the transition behavior is derived using the u.d. property of a relevant sequence. 

Let us discuss the main results of this paper in more detail. Similarly to the 2D case, we assume here that the step-size along the affine variable is $\e$, and the density of measured directions on the unit sphere is $O(\e^2)$. First, we extend the notion of a generic point. Second, we obtain the transition behavior of $f_\e$ in an $O(\e)$-neighborhood of a generic point $x_0$ where $f$ has a jump discontinuity. This answers the question Q1 above. A connection with the u.d. theory here is similar to the one in \cite{kat_2017}. 

Third, we answer the question Q2. We show that if $x_0\not\in S:=\text{singsupp}(f)$ is generic, then $f_\e(x_0)\to f(x_0)$ as $\e\to0$. This means that if $x_0\in S$, then the singularities of $f$ at a distance from $x_0$ (i.e., ``remote'' singularities) do not contribute to the transition behavior at $x_0$. Only the behavior of $f$ near $x_0$ contributes to the edge response. There is no contradiction between the assumptions $x_0\in S$ and $x_0\not\in S$, since the singularities of $f$ near $x_0$ and away from $x_0$ can be separated by a partition of unity using the linearity of the Radon transform. Additionally, this result proves that the discrete inversion formula ``works'' (i.e., provides increasingly accurate results as $\e\to0$) and illuminates the reasons why. We show that reconstruction is guaranteed to work if certain collection of points is u.d. inside a set, whose shape depends on $S$. A major complication in the proof is that the set shrinks as $\e\to0$. This connection with the u.d. theory never appeared in the literature before (including \cite{kat_2017}).

As was noted in \cite{pal90, pal93}, the behavior of the reconstruction algorithm depends on the geometry of $S$. Clearly, it also depends on the strength of singularities of $f$ (e.g., in the Sobolev scale). Comprehensive analysis of all these cases is beyond the scope of this paper. Here we consider only the case when $f$ has jump discontinuities across $S$, and $S$ is the union of smooth convex surfaces. 

The paper is organized as follows. In Section~\ref{crt_3D} we describe the problem set-up, specify the class of functions we consider, describe discrete Radon transform data, introduce the discrete inversion formula, define the notion of a generic point, and formulate the main result (Theorem~\ref{thm:main}). In Section~\ref{sec:local} we obtain the edge response in a neighborhood of a generic point $x_0\in S$. In Section~\ref{remote-crt} we show that ``remote'' singularities do not contribute to the edge response. In Section~\ref{numerix} we illustrate on numerical experiments that when $x_0$ is generic, then the theoretically predicted and numerically computed edge responses match very well. We also show that if $x_0$ is not generic, then the match may no longer be accurate. Technical results related to the u.d. property of certain sets of points, which are needed in Section~\ref{remote-crt}, are proven in Appendix~\ref{sex:auxres}.

\section{Preliminary construction}\label{crt_3D}

Consider a function $f(x)$, which can be represented as a finite sum
\begin{equation}\label{f_def_1}
f(x)=\sum_j \chi_{D_j} f_j(x),
\end{equation}
where $\chi_{D_j}$ is the characteristic function of the domain $D_j$. We assume that for each $j$:
\begin{enumerate}
\item $D_j$ is bounded,  
\item The boundary of $D_j$ is $C^{\infty}$ and convex,  
\item $f_j$ is $C^\infty$ in a domain containing the closure of $D_j$.
\end{enumerate}
The Radon transform of $f$ is defined as follows:
\be\label{crt_1}
g(\al,p)=\int _{\al\cdot x=p} f(x)dx,
\ee
where $dx$ is the area element on the plane $\Pi(\al,p):=\{x\in\br^3:\al\cdot x=p\}$. The discrete data are given by
\be\label{data_crt}
g(\al_i,p_j),\ p_j=\e(\cp(\al_i)+j),\ j\in\mathbb Z,
\ee
where $\cp\in C^1(S^2)$, and $i\in\mathbb N$, $i\le O(\e^{-2})$, is the index that enumerates the measured directions $\al_i\in S^2$. When considering $\al_i$ in some small open set $\Omega\subset S^2$, we will use a 2D multi-index $\vec i=(i_1,i_2)$ instead of $i$: $\al_{\vec i}$. The assumption is that for almost all $\al_0\in S^2$ there exists an open set $\Omega$, $\al_0\in\Omega\subset S^2$, such that 
\be\label{alH}
\avi=H(\e(i_1+r_1),\e(i_2+r_2)) \text{ for any }\avi\in\Omega. 
\ee
Here $r_{1,2}$ are some constants, $0\le r_{1,2}<1$, which may depend on $\e$. The function $H(\vec t):U\to \Omega$, where $U\subset\br^2$ is some bounded domain, is a smooth diffeomorphism. Both $H$ and $U$ may depend on $\al_0$ and $\Omega$. We also assume that the determinant 
\be\label{dHnz}
\dt(H'(\vec t))=\dt\left(\frac{\pa\al}{\pa (t_1,t_2)}\right)>0
\ee
is bounded away from zero in each $U$.

Let $\varphi$ be an interpolating kernel, which satisfies the following assumptions:
\begin{itemize}
\item[A1.] $\varphi$ is exact up to the order $2$, i.e.
\be\label{ker-int}
\sum_{j\in \mathbb Z} j^m\varphi(t-j)=t^m,\quad 0\le m \le 2,\ t\in\br,
\ee
\item[A2.] $\varphi$ is compactly supported, 
\item[A3.] The derivatives $\varphi^{(m)}$, $1\le m\le 3$, exist, 
\item[A4.] $\varphi^{(3)}$ is piecewise continuous and bounded,
\item[A5.] \label{normaliz} $\int\varphi(t)dt=1$.
\end{itemize}
The interpolated (in $p$) version of $g$ can be written in the form
\be\label{f-int-crt}
g_\e(\al_i,p):=\sum_{j \in \mathbb Z} g(\al_i,\e(\cp(\al_i)+j)) \varphi\left(\frac{p-\e(\cp(\al_i)+j)}{\e}\right).
\ee
To simplify notations, the dependence of $\cp$ on $\al$ is omitted in most places.

Since the Radon transform is even, the classical inversion formula with continuous data is given by
\be\label{invf-cont-crt}
f(x)=-\frac1{4\pi^2}\int_{S_+^2}\left. (\pa/\pa p)^2 g(\al,p)\right|_{p=\al\cdot x} d\al,
\ee
where $S_+^2$ is any half of the unit sphere in $\br^3$. 
The discrete inversion formula is given by 
\be\label{invf-discr-crt}
f_\e(x)=-\frac1{4\pi^2}\sum_{i:\,\al_i\in S_+^2} c_i\left. (\pa/\pa p)^2 g_\e(\al_i,p)\right|_{p=\al_i\cdot x},
\ee
where $c_i$ are integration weights. We assume that there exist $0<a\le b$ so that
\be\label{wghts}
a\e^2\le c_i \le b\e^2
\ee
for all $i$ as $\e\to0$. A convenient and useful way to think of the coefficients $c_i$ is that they represent the area of pieces that tessellate $S^2$, each piece is sufficiently regular and contains only one $\al_i$. In particular, $c_i=|H'(\vec t)|\e^2+O(\e^3)$ for any $\vec t=(t_1,t_2)$ such that $\al=H(\vec t)$ belongs to the same tessellation piece as $\al_i$. 

Pick a point $x_0\in S:=\text{singsupp} f$, and suppose that $S$ is a smooth surface with positive principal curvatures in a neighborhood of $x_0$. Let $\Theta_0$ be the unit vector normal to $S$ at $x_0$ and pointing into the interior of the corresponding domain $D_j$.
Consider the point
\be\label{rec-pt-crt}
\xh:=x_0+\e h\Theta_0,
\ee
where $h$ varies over a bounded set. Denote also:
\be\label{f0-def-crt}
f_0:=\lim_{\e\to 0^+}f(x_0+\e\Theta_0),\ \fj:=\lim_{\e\to 0^+}(f(x_0+\e\Theta_0)-f(x_0-\e\Theta_0)).
\ee
Introduce the coordinates $\al^\perp$ on $S^2$ in a neighborhood of $\Theta_0$:
\be\label{rotate-crt}
\al=\al^\perp+\al_3\Theta_0\in S^2,\ \al^\perp=(\al_1,\al_2)\in\Theta_0^\perp,\ \al_3=\sqrt{1-|\al^\perp|^2}.
\ee

Let $p=p_0(\al):\, S^2\to\br$ be a function determined by the condition that the plane $\Pi(\al,p_0(\al))$ be tangent to $S$. Generally, $p_0$ can be multi-valued. In what follows we always consider one of its single-valued local branches. Pick some $x_0$ and introduce the functions 
\be\label{new-vars-orig}\begin{split}
q(\al):=\al\cdot x_0,\ v(\al):=\al\cdot x_0-p_0(\al).
\end{split}
\ee
Pick any $\al_0\in S^2$ such that $x_0\in \Pi(\al_0,p_0(\al_0))$. If $S$ has positive principal curvatures at a point $z_0$, where $\Pi(\al_0,p_0(\al_0))$ is tangent to $S$, and $x_0\not=z_0$, then the equation $v(\al)=0$ determines a locally smooth curve, which we denote by $\Gamma$. By construction, $\al_0\in\Gamma$. The curve is smooth, because $v'(\al_0)=x_0^\perp-z_0^\perp\not=0$. The derivative with respect to $\al$ is computed on the unit sphere, and $x_0^\perp,z_0^\perp$ are the orthogonal projections of $x_0,z_0$, respectively, onto the plane $\al_0^\perp$. Clearly, $\Pi(\al_0,p_0(\al_0))$ can be tangent to $S$ at other points, but here $p_0(\al)$ is the local branch that is valid in a neighborhood of $z_0$. Our statements apply to any of the branches and the corresponding points of tangency.

\begin{definition} We say that a point $x_0\in S=\text{singsupp} f$ is generic if
\begin{enumerate}
\item $S$ is a smooth surface with positive principal curvatures in a neighborhood of $x_0$. In particular, $S$ does not self-intersect at $x_0$;
\item\label{Hgrad} Let $\Theta_0$ be the unit vector normal to $S$ at $x_0$, and let $\vec t^*$ be such that $\Theta_0=H(t^*)$. The vector $\nabla (H(\vec t)\cdot x_0)|_{\vec t=\vec t^*}$ has at least one irrational component; and
\item\label{poscurv} If a plane $\Pi$ contains $x_0$ and is tangent to $S$ at some $z_0\not=x_0$, then the principal curvatures of $S$ at $z_0$ are positive.
\end{enumerate}
An additional condition for a point to be generic is formulated in Section~\ref{remote-crt}.
\end{definition}

\begin{definition} We say that a point $x_0\not\in S=\text{singsupp} f$ is generic if it satisfies condition \eqref{poscurv} above and the additional condition in Section~\ref{remote-crt}.
\end{definition}

Now we formulate the main result of the paper.
\begin{theorem}\label{thm:main} For a generic $x_0\in S$, one has
\be\label{main-res}
\lim_{\e\to0}f_\e(\xh)=f_0-\fj \int_h^{\infty} \varphi(s) d s.
\ee
\end{theorem}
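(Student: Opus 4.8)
The plan is to combine a localization step with a fine analysis of the discrete inversion formula \eqref{invf-discr-crt} in a shrinking neighborhood of the self-tangency direction $\Theta_0$, the decisive ingredient being an equidistribution argument that replaces a rapidly oscillating lattice sum by its phase average. First I would localize. By linearity of the Radon transform and a partition of unity, write $f=u+\fj\chi_D$ near $x_0$, where $D$ is the local interior domain (so $\Theta_0$ points into $D$) and $u$ is continuous with $u(x_0)=f_0-\fj$. Contributions of the singular support of $f$ away from $x_0$ — in particular planes through $x_0$ tangent to $S$ at some $z_0\neq x_0$, whose regularity is guaranteed by genericity condition \eqref{poscurv} — are discarded using the ``remote singularities'' result of Section~\ref{remote-crt}. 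The reconstruction of the continuous part $u$ converges to $u(x_0)=f_0-\fj$ by standard continuous-data estimates, so it remains to show that the reconstruction of $\fj\chi_D$ tends to $\fj\big(1-\int_h^\infty\varphi(s)\,ds\big)$; adding the two limits gives \eqref{main-res}.

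Second I would extract the singularity of the data. By genericity condition~(1) the surface $S$ has positive principal curvatures, so near the tangent point $z(\al)\to x_0$ the cross-sectional area obeys $g(\al,p)=\frac{2\pi}{\sqrt{K(z(\al))}}\,(p-p_0(\al))_{+}+(\text{more regular})$, where $K$ is the Gaussian curvature and $p_0$ the support value. Substituting \eqref{f-int-crt} into \eqref{invf-discr-crt}, using $g(\al_i,\e(\cp+j))=\e\,\frac{2\pi}{\sqrt K}(j-\tau_i)_{+}+\dots$ with $\tau_i:=p_0(\al_i)/\e-\cp(\al_i)$, and applying $(\pa/\pa p)^2$ (which brings a factor $\e^{-2}$), the contribution of this corner singularity to the reconstruction of $\fj\chi_D$ becomes $-\frac{1}{4\pi^2}\sum_i c_i\,\frac{\fj}{\e}\,\frac{2\pi}{\sqrt K}\,\Sigma(w_i,\{\tau_i\})$, where $\Sigma(w,\theta):=\sum_k (k-\theta)_{+}\varphi''(w+\theta-k)$ and $w_i:=v(\al_i)/\e+h\,(\al_i\cdot\Theta_0)$, $v(\al)=\al\cdot x_0-p_0(\al)$. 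Since $z_0=x_0$ here, one has $v(\Theta_0)=0$ and $v'(\Theta_0)=x_0^\perp-z_0^\perp=0$, so $v$ is quadratic near $\Theta_0$; hence the only directions with $w_i=O(1)$ fill a cap of radius $O(\sqrt\e)$ and area $O(\e)$, which contains $O(\e^{-1})$ of the $\al_i$ and, together with $c_i=O(\e^2)$ and the prefactor $\e^{-1}$, keeps the whole expression $O(1)$.

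Third — the heart of the proof — I would pass from the lattice sum to an integral. The quantity $\{\tau_i\}$ is a fast phase: as $\vec i$ advances by one lattice step, $\tau_i$ changes by $\approx\big(\nabla(H(\vec t)\cdot x_0)\big)_{1,2}$ at $\vec t^{\,*}$ (using $\nabla p_0=\nabla q$ at $\Theta_0$), which by genericity condition \eqref{Hgrad} has an irrational component. Invoking the equidistribution results of Appendix~\ref{sex:auxres}, the phases $\{\tau_i\}$ are uniformly distributed and asymptotically independent of the slow variable $w_i$, so the $i$-sum factors into a phase average times a slow integral. The phase average collapses the lattice sum:
\be
\int_0^1\Sigma(w,\theta)\,d\theta=\int_{-\infty}^{w}(w-u)\,\varphi''(u)\,du=\varphi(w),
\ee
obtained by unfolding the $k$-sum against the $\theta$-integral into an integral over $\br$ and integrating by parts, using A1--A5. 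The main obstacle is exactly to make this equidistribution uniform over the cap whose radius shrinks like $O(\sqrt\e)$, jointly with the $h$-dependence entering through $w_i$; this shrinking-domain u.d. is the novel mechanism and the step requiring the most care.

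Finally I would evaluate the slow integral. Near $\Theta_0$, $d\al\approx d\al^\perp$, $v(\al)\approx\tfrac12(\al^\perp)^\top V\al^\perp$ with $V=v''(\Theta_0)$, and the rescaling $\al^\perp=\sqrt\e\,\eta$ gives
\be
\frac{1}{\sqrt K}\int_{\br^2}\varphi\!\left(\tfrac12\,\eta^\top V\eta+h\right)d\eta=\frac{2\pi}{\sqrt{K\,\dt V}}\int_h^\infty\varphi(s)\,ds
\ee
after diagonalizing $V$ and passing to polar coordinates. The proof then closes with the differential-geometric identity $\dt V=\dt\big(v''(\Theta_0)\big)=1/K(x_0)$ — the Hessian of the support function encodes the principal radii of curvature, and one verifies it on a sphere, where $V=R\,I$ and $K=R^{-2}$ — which cancels the curvature factor and leaves $-\fj\int_h^\infty\varphi(s)\,ds$ for the corner. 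Together with the $+\fj$ from the bulk of the area function (reconstructing classically), this yields the limit $\fj\big(1-\int_h^\infty\varphi(s)\,ds\big)$ for $\fj\chi_D$, and adding $u(x_0)=f_0-\fj$ gives \eqref{main-res}.
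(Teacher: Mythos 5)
Your proposal follows essentially the same route as the paper's proof: localization by linearity with remote tangencies handled by Theorem~\ref{thm:remote}; extraction of the corner singularity $g\sim\frac{2\pi\fj}{\sqrt{\det Q}}(p-p_0)_+$ of the data; reduction to a fast-phase/slow-variable lattice sum over the $O(\sqrt\e)$ cap of directions; the phase average $\int_0^1\Sigma(w,\theta)\,d\theta=\varphi(w)$, which is identical to the paper's identity \eqref{int-psi-st1}; and the curvature cancellation $\det v''(\Theta_0)=\det Q^{-1}(\Theta_0)$, which is exactly how \eqref{f1-st4-crt} produces $-\fj\int_h^{\infty}\varphi(s)\,ds$. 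All the formulas you display are correct, and your bookkeeping ($u$ continuous at $x_0$ plus $\fj\chi_D$) is an equivalent variant of the paper's normalization ($f\equiv0$ on the exterior side near $x_0$).

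There is, however, a genuine gap at the step you yourself single out as the crux, and it is a missing idea rather than missing routine detail. Over the cap one has $|\vec i|=O(\e^{-1/2})$, so by \eqref{frstarg} the phase $\tau_i$ is \emph{not} ``linear increment $\mHo'$ plus a shift'': it contains the quadratic term $\tfrac12\mHo''\tal_{\vec i}^\perp\cdot\tal_{\vec i}^\perp$, which is $O(1)$ across the cap and varies from point to point. A linear sequence perturbed by an $O(1)$ slowly varying term is not automatically u.d., and the ``asymptotic independence'' of $\{\tau_i\}$ from the slow variable $w_i$ is precisely what must be proved; moreover, the Appendix results you invoke (Lemma~\ref{lem:udpts-e}, Corollaries~\ref{cor:udpts-v2-e}--\ref{cor:udpts-2d-e}) concern sequences $\{f(\e i)/\e\}$ with a one-dimensional index along a curve --- they are built for the remote-singularity analysis of Section~\ref{remote-crt} and do not apply to this two-dimensional cap sum. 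The mechanism the paper supplies, and which your sketch lacks, is the two-scale argument of \eqref{psi-simple}--\eqref{jelim}: partition the rescaled cap $|\tal^\perp|<A$ into squares $B_l$ of fixed side $\e'$; inside each square freeze every slowly varying quantity (the quadratic phase term, the second argument of $\psi$, and the weights $\tilde c_{\vec i}$) at the square's center, with errors $O(\e')$ uniform in $\e$ and $l$; the remaining phase $A_\e'+\vec i\cdot\mHo'$ is then \emph{exactly} linear, so condition \eqref{Hgrad} together with Theorem 2.9 and Example 2.9 of \cite{KN_06} gives u.d. within each square (legitimate because each square contains $\sim(\e')^2/\e\to\infty$ lattice points); finally one sums the resulting Riemann sums over $l$ and lets $\e'\to0$. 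Without this (or an equivalent device), the factorization of the sum into a phase average times a slow integral is an assertion, not a proof. A secondary, lesser looseness: convergence of the reconstruction of your continuous piece $u=f-\fj\chi_D$ is not ``standard continuous-data'' theory, since $u$ retains a kink across $S$ at $x_0$ and its Radon data is only $C^1$ in $p$ near tangent directions; the paper absorbs exactly this effect via the $O(\e^2)$ bracket term in \eqref{f1-lt-crt} (equivalently, the term $g^{(2)}$ in \eqref{g1g2}), and a complete version of your decomposition needs the same estimate.
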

The proof of the theorem is split into two sections.

\section{Contribution of the local singularity}\label{sec:local}

In this section we choose the hemisphere $S_+^2:=\{\al\in S^2:\ \al_3>0\}$ in \eqref{invf-cont-crt} and \eqref{invf-discr-crt}. The range of $\al$ in \eqref{invf-discr-crt} can be split into three sets:
\be\label{sets-crt} \begin{split}
\Omega_1&:=\{\al\in S_+^2:\,|\al^\perp|<A\sqrt\e\},\ \Omega_2:=\{\al\in S_+^2:\,A\sqrt\e<|\al^\perp|<\om\}, \\
\Omega_3&:=S_+^2\setminus (\Omega_1\cup\Omega_2),
\end{split}
\ee
for some small (but fixed) $\om>0$. Here $A>0$ is a large parameter. Let $f_\e^{(j)}$ denote the value of the sum in \eqref{invf-discr-crt} ranging over indices $i$ such that $\al_i\in\Omega_j$, $j=1,2,3$. Consider $f_\e^{(1)}$ first. 

It will be shown below (see Section~\ref{lot-crt}) that the Radon transform of locally smooth components of $f$ does not contribute to the transition behavior of $f_\e$. Hence, without loss of generality, we can assume that $f\equiv0$ in a neighborhood of $x_0$ on the exterior side of $S$ (this is the side corresponding to $h<0$ in \eqref{rec-pt-crt}). By linearity and using a partition of unity if necessary, we can assume that $\ts(f)$ is contained in a small neighborhood of $x_0$. The local branch of $p_0$ that we use in this section is determined by the condition that the point of tangency be close to $x_0$. As is known, \cite{rz2, rz1}, in this case we have
\be\label{loc-v2-crt}
g(\al,p)=(p-p_0(\al))_+G(\al,p-p_0(\al)),
\ee
where $G$ is a smooth function near $(\Theta_0,0)$,  
\be\label{G-val}
G(\al,0)=2\pi \fj(\al)/\sqrt{\text{det}Q(\al)},
\ee
and $Q(\al)$ is the matrix of the second fundamental form of $S$ in the basis $(\al_1,\al_2)$ at the point where $\Pi(\al,p_0(\al))$ is tangent to $S$. A simple calculation shows that
\be\label{t0-eq-crt}
p_0(\al)=\al\cdot x_0-\frac{Q^{-1}(\Theta_0)\al^\perp\cdot \al^\perp}{2}+O(\e^{3/2})
\ee
when $|\al^\perp|=O(\sqrt\e)$.
Hence the interpolated data are given by 
\be\label{lead-t-int-crt}\begin{split}
g_\e(\al_i,p)= & \sum_{j\in \mathbb Z}\left[\frac{2\pi \fj(\al_i)}{\sqrt{\text{det}Q(\al_i)}} \left(\e(\cp+j)-p_0(\al_i)\right)_+ +O\left((\e(\cp+j)-p_0(\al_i))_+^2\right)\right]\\
&\hspace{3cm}\times\varphi\left(\frac{p-\e(\cp+j)}{\e}\right).
\end{split}
\ee

In view of \eqref{invf-discr-crt} and \eqref{lead-t-int-crt}, denote
\be\label{psi-def-crt}
\psi(q,s):=\sum_{j\in \mathbb Z} (j-q+s)_+
\varphi''(q-j),\ q,s\in\br.
\ee
The assumptions A1--A4 imply that
\be\label{psi-ders}
\text{the derivatives $\psi'_q$ and $\psi'_s$ are piecewise-continuous and bounded},
\ee
\be\label{psi-per}
\psi(q,s)=\psi(q+n,s) \text{ for any }n\in\mathbb Z, 
\ee
\be\label{psi-loc}
\psi(q,s)=0 \text{ if $|s|$ is large enough.} 
\ee
Combining \eqref{t0-eq-crt}--\eqref{psi-def-crt} and using \eqref{invf-discr-crt}, \eqref{sets-crt}, we obtain
\be\label{f1-lt-crt}
\begin{split}
& f_\e^{(1)}(x_h)\\
&= -\frac{1}{2\pi} \sum_{|\al_i^\perp|<A\sqrt\e} c_i\sum_{j\in \mathbb Z}\\
&\qquad \biggl[\frac{\fj(\al_i)}{\sqrt{\text{det}Q(\al_i)}}\left(\e(\cp+j)-\al_i\cdot x_0+\frac{Q^{-1}\al_i^\perp\cdot \al_i^\perp}{2}+O(\e^{3/2})\right)_+
+O(\e^2)\biggr]\\
&\qquad\times\e^{-2}\ik''\left(\frac{\al_i\cdot x_0-\check p}{\e}+h+O(\e)-j\right)\\
&=-\frac{\fj}{2\pi\sqrt{\text{det}Q}} \sum_{|\al_i^\perp|<A\sqrt\e} \frac{c_i}{\e}
\psi\left(\frac{\al_i\cdot x_0}{\e}-\cp_0+h,h+\frac{Q^{-1}\al_i^\perp\cdot \al_i^\perp}{2\e}\right)+O(\e^{1/2}),
\end{split}
\ee
where we have introduced the notation
\be\label{f0Q-def}
\fj:=\fj(\Theta_0),\ \cp_0:=\cp(\Theta_0),\ Q:=Q(\Theta_0).
\ee
The $O(\e^2)$ term in brackets in \eqref{f1-lt-crt} corresponds to the second term in brackets in \eqref{lead-t-int-crt}. This follows because $|x_h-x_0|=O(\e)$, $\ik$ is compactly supported, and $|\al_i^\perp|=O(\e^{1/2})$ (cf. \eqref{sets-crt} and \eqref{t0-eq-crt}). In \eqref{f1-lt-crt} we used 
that (a) there are $O(1/\e)$ terms in the sum with respect to $i$, and (b) using \eqref{wghts}, the sum in \eqref{f1-lt-crt} remains bounded as $\e\to0$. 

Set $\tilde\al^\perp:=\al^\perp/\sqrt\e$. The tessellation of $S^2$ based on $\al_i$ induces a tessellation of the domain $|\tilde\al^\perp|<A$. Let $\tilde c_i$ denote the area of the tessellation piece corresponding to $\tilde\al_i^\perp$. From \eqref{rotate-crt},
\be\label{areas}
\tilde c_i=\frac{c_i}{\e}\left(1+O(\e^{1/2})\right).
\ee
The term $O(\e^{1/2})$ here also includes the Jacobian of the transformation $\br^2\ni \al^\perp\to\al\in S^2$.

Apply \eqref{alH} to an open set containing $\Theta_0$. By shifting indices $i_{1,2}$ and adjusting $r_{1,2}\in[0,1)$, if necessary, we can always assume that $H(0,0)=\Theta_0$. Denote 
\be\label{hprime}
\tilde q(\vec t):=H(\vec t)\cdot x_0,\ \mHo':= \tilde q'(\vec 0),\ \mHo'':= \tilde q''(\vec 0).
\ee
Then 
\be\label{frstarg}
\frac{\avi\cdot x_0}{\e}=\frac{\Theta_0\cdot x_0}{\e}+
(i_1+r_1,i_2+r_2)\cdot \mHo'+\frac{\mHo''\tilde\al_{\vec i}^\perp\cdot \tilde\al_{\vec i}^\perp}2+O(\e^{1/2}).
\ee
Combining the above, rewrite \eqref{f1-lt-crt} as follows
\be\label{f1-lt-crt-v3}
\begin{split}
f_\e^{(1)}(x_h)=& -\frac{\fj}{2\pi\sqrt{\text{det}Q}}J_\e+O(\e^{1/2}),\\
J_\e:= & \sum_{|\tilde\al_{\vec i}^\perp|<A} \tilde c_{\vec i}
\psi\left(A_\e+\vec i\cdot \mHo'+\frac{\mHo''\tilde\al_{\vec i}^\perp\cdot \tilde\al_{\vec i}^\perp}2,h+\frac{Q^{-1}\tilde\al_{\vec i}^\perp\cdot \tilde\al_{\vec i}^\perp}{2}\right),
\end{split}
\ee
where $A_\e$ depends on $\e$, but is independent of $\vec i$. The $O(\e^{1/2})$ term in \eqref{frstarg} is absorbed by the $O(\e^{1/2})$ term in \eqref{f1-lt-crt-v3}.

To find the limit of $J_\e$ as $\e\to0$, pick some small $\e'>0$ and break up the domain $|\tilde\al_{\vec i}^\perp|<A$ into squares $B_l$ of size $\e'$. 
Consider one $B_l$ and all $\tilde\al_{\vec i}^\perp$ within it. Replace all $\tilde\al_{\vec i}^\perp\in B_l$ with $\tilde\al_*^\perp$, where $\tilde\al_*^\perp$ is the center of $B_l$, everywhere in the arguments of $\psi$. This leads to an error of magnitude $O(\e')$. By $O(\e')$ we denote any quantity whose magnitude does not exceed $c\e'$, where $c$ is independent of the selected square $B_l$, the chosen $\tilde\al_{\vec i}^\perp\in B_l$, and $\e$. Clearly,
\be\label{psi-simple}
\begin{split}
&\psi\left(A_\e+\vec i\cdot \mHo'+\frac{\mHo''\tilde\al_{\vec i}^\perp\cdot \tilde\al_{\vec i}^\perp}2,h+\frac{Q^{-1}\tilde\al_{\vec i}^\perp\cdot \tilde\al_{\vec i}^\perp}{2}\right)\\
&=\psi\left(A_\e+\vec i\cdot \mHo'+\frac{\mHo''\tilde\al_*^\perp\cdot \tilde\al_*^\perp}2,h+\frac{Q^{-1}\tilde\al_*^\perp\cdot \tilde\al_*^\perp}{2}\right)+O(\e'),\ \tilde\al_{\vec i}^\perp\in B_l.
\end{split}
\ee
By \eqref{psi-per} and \eqref{psi-simple}, we can consider the set
\be\label{sqseq}
\left\{A_\e'+\vec i\cdot \mHo'\right\},\ A_\e'= A_\e+\frac{\mHo''\tilde\al_*^\perp\cdot \tilde\al_*^\perp}2,
\ee
where ${\vec i}$ are such that $\tilde\al_{\vec i}^\perp\in B_l$. There are $O(1/\e)$ such $\vec i$: $O(\e^{-1/2})$ per each coordinate $i_{1,2}$. Here and in what follows, $\{x\}$ denotes the fractional part of a number $x\in\br$.
By Theorem 2.9 and Example 2.9 of \cite{KN_06}, the set $\left\{\vec i\cdot \mHo' \right\}$ is u.d. as $\e\to0$ if at least one of the components of the vector $\mHo'$ is irrational (see condition (2) in the definition of a generic point). Clearly, the shift by $A_\e'$ does not affect the u.d. property. Therefore, for the sum of the terms in \eqref{f1-lt-crt-v3} corresponding to $\tilde\al_i^\perp\in B_l$ we obtain
\be\label{part-sum}
\begin{split}
&\sum_{\tilde\al_{\vec i}^\perp\in B_l} \tilde c_{\vec i}
\psi\left(A_\e+\vec i\cdot \mHo'+\frac{\mHo''\tilde\al_{\vec i}^\perp\cdot \tilde\al_{\vec i}^\perp}2,h+\frac{Q^{-1}\tilde\al_{\vec i}^\perp\cdot \tilde\al_{\vec i}^\perp}{2}\right)\\
&=\sum_{\tilde\al_{\vec i}^\perp\in B_l} \tilde c_{\vec i}
\psi\left(\left\{A_\e'+\vec i\cdot \mHo'\right\},h+\frac{Q^{-1}\tilde\al_*^\perp\cdot \tilde\al_*^\perp}{2}\right)+O(\e')\\
&=(\tilde c_*N_\e(B_l))\frac1{N_\e(B_l)}\sum_{\tilde\al_{\vec i}^\perp\in B_l} \psi\left(\left\{A_\e'+\vec i\cdot \mHo'\right\},h+\frac{Q^{-1}\tilde\al_*^\perp\cdot \tilde\al_*^\perp}{2}\right)+O(\e')\\
&\to \text{Area}(B_l)\left[\int_0^1 
\psi\left(t,h+\frac{Q^{-1}\tilde\al_*^\perp\cdot \tilde\al_*^\perp}{2}\right)dt+O(\e')\right],\
\e\to0.
\end{split}
\ee
Here $N_\e(B_l)$ is the number of terms in the sum in \eqref{part-sum}, and $\tilde c_*$ is the average of $\tilde c_{\vec i}$. By construction, $|\tilde c_{\vec i}-\tilde c_*|/\e=O(\e')$ whenever $\tilde\al_{\vec i}^\perp\in B_l$. 

Sum over all squares and observe that the resulting Riemann sum is within $O(\e')$ from the corresponding integral. This yields:
\be\label{jelim}
\begin{split}
\lim_{\e\to0} J_\e=\int_{|\tal^\perp|<A} \int_0^1
\psi\left(t,h+\frac{Q^{-1}\tal^\perp\cdot \tal^\perp}{2}\right)dt\,d\tal^\perp+O(\e').
\end{split}
\ee
Since $\e'>0$ can be as small as we like, combining \eqref{jelim} with \eqref{f1-lt-crt-v3} gives
\be\label{f1-st3-crt}
\lim_{\e\to0}f_\e^{(1)}(\xh)= -\frac{\fj}{2\pi\sqrt{\text{det}Q}} \int_{|\tal^\perp|<A} \int_0^1
\psi\left(t,h+\frac{Q^{-1}\tal^\perp\cdot \tal^\perp}{2}\right)dt\, d\tal^\perp.
\ee
In view of \eqref{psi-loc}, if $h$ is fixed, the integrand in \eqref{f1-st3-crt} is zero when $|\tal^\perp|$ is large enough (recall that $Q(\Theta_0)$ is positive definite). Hence we can select $A>0$ large enough and obtain by changing variables
\be\label{f1-st4-crt}
\begin{split}
\lim_{\e\to0}f_\e^{(1)}(\xh)&= -\frac{\fj}{2\pi\sqrt{\text{det}Q}} \int_{\br^2} \int_0^1
\psi\left(t,h+\frac{Q^{-1}\tal^\perp\cdot \tal^\perp}{2}\right)dt\,d\tal^\perp\\
&=-\fj \int_0^{\infty}\int_0^1 \psi(t,h+s)dt\,ds
=-\fj \int_h^{\infty}\int_0^1 \psi(t,s)dt\,ds.
\end{split}
\ee

\begin{lemma}\label{lem:U-lims-crt} One has 
\begin{equation}\label{U-lims-crt}
-\int_h^{\infty}\int_0^1 \psi(t,s)dt\,ds = \begin{cases}0,& h>c, \\
-1,& h< -c\end{cases} 
\end{equation}
for some $c>0$ large enough.
\end{lemma}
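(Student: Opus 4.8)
The plan is to evaluate the inner integral $\int_0^1\psi(t,s)\,dt$ in closed form, thereby collapsing the double integral to a single integral of $\varphi$ against which the normalization A5 and the compact support A2 apply directly. The key observation is that the periodicity \eqref{psi-per} lets me trade the integral over one period $[0,1]$ against the summation index $j$ in the definition \eqref{psi-def-crt}, turning the series into a single integral over $\br$.

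Concretely, I would first substitute $u=t-j$ in the $j$-th term of $\int_0^1\psi(t,s)\,dt=\sum_j\int_0^1(j-t+s)_+\varphi''(t-j)\,dt$. Under this substitution $j-t+s=s-u$ and the range of integration becomes $[-j,1-j]$. Since the intervals $\{[-j,1-j]\}_{j\in\mathbb Z}$ tile $\br$, summing over $j$ yields
\[
\int_0^1\psi(t,s)\,dt=\int_{\br}(s-u)_+\varphi''(u)\,du=\int_{-\infty}^s(s-u)\varphi''(u)\,du.
\]
The interchange of summation and integration is harmless because $\varphi$ is compactly supported (A2), so for each fixed $(t,s)$ only finitely many $j$ contribute.

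Next I would integrate by parts once. Writing $(s-u)\varphi''(u)\,du=(s-u)\,d\varphi'(u)$, the boundary term $[(s-u)\varphi'(u)]$ vanishes at $u=s$ (since $s-u=0$ there) and at $u=-\infty$ (since $\varphi'\equiv0$ off a compact set by A2--A3), leaving $\int_{-\infty}^s\varphi'(u)\,du=\varphi(s)$. Hence $\int_0^1\psi(t,s)\,dt=\varphi(s)$, a remarkably clean identity.

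The claim then follows at once: $-\int_h^{\infty}\int_0^1\psi(t,s)\,dt\,ds=-\int_h^{\infty}\varphi(s)\,ds$. Choosing $c>0$ with $\ts(\varphi)\subset[-c,c]$ (possible by A2), for $h>c$ the integral is $0$, while for $h<-c$ it equals $\int_{\br}\varphi=1$ by A5, giving $-1$. I do not expect a serious obstacle here: the whole argument rests on compact support to justify the interchange and to kill the boundary terms, and on A5 for the normalization. The only genuinely clever point is recognizing the period-against-index collapse in the first step; everything after that is routine calculus.
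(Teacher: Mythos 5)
Your proposal is correct and follows essentially the same route as the paper: collapse the sum over $j$ against the unit period to get $\int_0^1\psi(t,s)\,dt=\int_{\br}(s-u)_+\varphi''(u)\,du=\varphi(s)$, then conclude via A2 and A5. The only difference is cosmetic—you spell out the integration by parts that the paper leaves implicit in the identity $\int_{\br}(s-u)_+\varphi''(u)\,du=\varphi(s)$.
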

\begin{proof} By \eqref{psi-def-crt} and \eqref{psi-loc},
\be\label{int-psi-st1}
\begin{split}
\int_0^1 \psi(t,s)dt&= \sum_{j\in \mathbb Z}\int_0^{1}(j-t+s)_+
\varphi''(t-j)dt = \int_{\br}(s-t)_+\varphi''(t)dt= \varphi(s).
\end{split}
\end{equation}
Using A2 and A5 we finish the proof.
\end{proof}

\subsection{Analysis of the terms $f_\e^{(2)}$ and $f_\e^{(3)}$.}\label{lot-crt}

The Radon transform of $f$ is smooth when $(\al,p)$ are such that $\al\in\Omega_3$ and $|p-\al\cdot x_0|=O(\e)$, i.e. $\Pi(\al,p)$ is not tangent to $S$. Hence the interpolated data has derivatives with respect to $p$ up to the order $3$ uniformly bounded as $\e\to0$. Therefore,
\be\label{f3-crt-st1}\begin{split}
&|f_\e^{(3)}(\xh)-f_\e^{(3)}(x_0)|\\
& \le \frac1{4\pi^2}\sum_{i:\al_i\in\Omega_3} c_i\left|\left. (\pa/\pa p)^2 g_\e(\al_i,p)\right|_{p=\al_i\cdot\xh}
-\left. (\pa/\pa p)^2 g_\e(\al_i,p)\right|_{p=\al_i\cdot x_0}\right|.
\end{split}
\ee
For a function $g\in C^3(\br)$ we have
\be\label{third-der}\begin{split}
g_\e''(t+O(\e))-g_\e''(t)
&=\frac1{\e^2}\sum_j g(\e j)\left[\varphi''\left(\frac{t+O(\e)-\e j}{\e}\right) -
\varphi''\left(\frac{t-\e j}{\e}\right) \right]\\
&=\frac1{\e^2}\sum_j \left(g(t)+(\e j-t)g'(t)+(\e j-t)^2\frac{g''(t)}2+O(\e^3)\right)\\
&\hspace{1.5cm}\times\left[\varphi''\left(\frac{t+O(\e)}{\e}-j\right) -
\varphi''\left(\frac{t}{\e}-j\right) \right]\\
&=O(\e),
\end{split}
\ee
where we have used \eqref{ker-int}. Combining with \eqref{f3-crt-st1} this implies
\be\label{f3-crt-st2}
|f_\e^{(3)}(\xh)-f_\e^{(3)}(x_0)|=O(\e).
\ee

In a similar fashion, we will show that 
\be\label{part2-lim}
\lim_{A\to\infty}\lim_{\e\to0}|f_\e^{(2)}(\xh)-f_\e^{(2)}(x_0)|=0.
\ee
Choose $w>0$ in \eqref{sets-crt} so that $\al\cdot x_0-p_0(\al)$ is strictly convex on $\Omega_1\cup\Omega_2$. In view of \eqref{t0-eq-crt}, this is possible. Hence the lower bound on the values of $\al\cdot x_0-p_0(\al)$ on $\Omega_2$ is determined by looking at its values at the boundary of $\Omega_1$, where \eqref{t0-eq-crt} holds. Pick $c>0$ large enough. Our argument implies that there exist sufficiently large $A>0$ and sufficiently small $\e_0>0$ such that $\al\cdot x_0-p_0(\al)>c\e$ on $\Omega_2$ for all $\e$, $0<\e\le\e_0$. Since $\varphi$ is compactly supported, $g(\al,p)$ is a smooth function in a neighborhood of all $(\al_i,p_j)$ such that $\al_i\in\Omega_2$ and  $(\al_i\cdot \xh-p_j)/\e\in\ts(\varphi)$. The reason is that for such $(\al,p)$ we can drop the subscript $'+'$ from $(p-p_0)_+$ in \eqref{loc-v2-crt}:
\be\label{loc-v3-crt}
g(\al,p)=(p-p_0(\al))G(\al,p-p_0(\al)).
\ee
Hence \eqref{part2-lim} follows similarly to \eqref{f3-crt-st1}--\eqref{f3-crt-st2}.

Examining the above argument more carefully, we see that on $\Omega_1\cup\Omega_2$, the part of the Radon transform of $f$ that is used in the inversion formula coincides with a globally smooth function, so we have 
\be\label{twoterms}
\lim_{\e\to0}(f_\e^{(2)}(x_0)+f_\e^{(3)}(x_0))=-\frac1{4\pi^2}\int_{S_+^2}\left. (\pa/\pa p)^2 g(\al,p)\right|_{p=\al\cdot x_0+0} d\al=f_0,
\ee
where $f_0$ is defined in \eqref{f0-def-crt}. The notation ``$+0$'' means that the derivative with respect to $p$ is evaluated on the interior side of the domain where the Radon transform of $f$ is smooth. The last equality in \eqref{twoterms} is an immediate corollary of the inversion formula and the smoothness of the Radon transform. 

\section{Contribution of remote singularities}\label{remote-crt}

Suppose the plane $\Pi(\Theta_0,p_0(\Theta_0))$ is tangent to $S$ at some $z_0$, $z_0\not=x_0$. 
In this section $x_0$ is generic, but otherwise arbitrary, and $\Theta_0\in S^2$ is arbitrary as well. In particular, $x_0$ is not necessarily on $S$. Again, using linearity, we suppose that $\ts(f)$ is a subset of a small neighborhood of $z_0$. The surface $S$ can have a wide variety of shapes in a neighborhood of $z_0$. Some of these shapes may produce artifacts (e.g., if $S$ is locally flat near $z_0$), while others - will not. Comprehensive analysis of all such cases is beyond the scope of this paper. Here we will consider one generic situation: the principal curvatures of $S$ at $z_0$ are positive. In this section we prove that the singularity at $z_0$ does not contribute to the transition behavior of the reconstruction at $x_0$. More precisely, we will prove that in the limit $\e\to0$ the discretized reconstruction formula \eqref{invf-discr-crt} gives the same result as the exact reconstruction formula \eqref{invf-cont-crt}.

To illustrate the main ideas, we start with the case when the reconstruction point is $x_0$ rather than $\xh$. As noted in Section~\ref{crt_3D}, assumption (3) in the definition of a generic point implies that the equation $\al\cdot x_0=p_0(\al)$ determines a locally smooth curve denoted $\Gamma$.  Let $\chi(\al)$ be a smooth cut-off function with small support such that $\chi\not\equiv0$ on $\Gamma$.
Using \eqref{new-vars-orig}, introduce the functions 
\be\label{new-vars}\begin{split}
\tilde q(\vec t):=(q\circ H)(\vec t),\ \tilde v(\vec t):=(v\circ H)(\vec t):\, U\to\br,
\end{split}
\ee
where $U$ is the domain of $H$.
The function $\tilde q$ here is the same as in \eqref{hprime}.

Let $U_{t}$ and $\Gamma_{t}$ be the images of $\ts(\chi)$ and $\Gamma\cap\ts(\chi)$, respectively, under the map $H^{-1}:\,\al\to\vec t$. 
Using \eqref{dHnz} and that $v'(\al)\not=0$ for any $\al\in\Gamma_t$, we can assume that $\ts(\chi)$ is so small 
that either $\tilde v_1'(\vec t)\not=0$ for any $\vec t\in \Gamma_t$ or $\tilde v_2'(\vec t)\not=0$ for any $\vec t\in \Gamma_t$. Without loss of generality, suppose $\tilde v_2'(\vec t)\not=0$. This implies that an equation of $\Gamma_{t}$ can be written in the form
\be\label{tildeq}
\Gamma_{t}:\ t_2=A(t_1)
\ee
for some smooth $A$. Indeed, setting $\tilde v(t_1,A(t_1))\equiv0$, the Implicit Function theorem implies that $A(t_1)$ is well-defined, and $A'=-\tilde v_1'/\tilde v_2'$.

Now we can formulate the last condition that a generic point satisfies. 
{\it
\begin{enumerate}
\setcounter{enumi}{3}
\item \label{zeromeas} Pick any local piece $\Gamma_t$ constructed as above and any $M_1,M_2\in\mathbb Z$ such that $M_1^2+M_2^2>0$. Consider the function $f(t_1)=M_1\tilde q(t_1,A(t_1))+M_2 A(t_1)$ defined on the domain $\{t_1\in\br:\, (t_1,A(t_1))\in\Gamma_t\}$. We have
\begin{enumerate}
\item The set of $t_1$ such that $f''(t_1)=0$ is the union of a finite number of distinct points and non-intersecting intervals, and 
\item If $f''(t_1)\equiv0$ on an interval, then $f'(t_1)$ is irrational on that interval. 
\end{enumerate}
An obvious modification needs to be made for pieces where $\tilde v_1'(\vec t)\not=0$, $t\in\Gamma_t$.
\end{enumerate}
}

Using \eqref{loc-v2-crt}, \eqref{G-val}, define
\be\label{g1g2}\begin{split}
g^{(1)}(\al,p):=&[2\pi \fj(\al)/\sqrt{\text{det}Q(\al)}](p-p_0(\al))_+,\\ 
g^{(2)}(\al,p):=&(p-p_0(\al))_+(G(\al,p-p_0(\al))-G(\al,0)).
\end{split}
\ee
Let $f_{\chi,\e}^{(j)}$ denote the result of inserting $\chi$ and $g^{(j)}$ into the discrete inversion formula \eqref{invf-discr-crt}, $j=1,2$. By linearity, $f_{\chi,\e}=f_{\chi,\e}^{(1)}+f_{\chi,\e}^{(2)}$. Similarly to \eqref{lead-t-int-crt}, \eqref{f1-lt-crt}:
\be\label{fext-lt-crt}
\begin{split}
f_{\chi,\e}^{(1)}(x_0)= &-\frac{1}{2\pi} \sum_i \frac{c_i}{\e} \frac{\fj(\al_i)}{\sqrt{\text{det}Q(\al_i)}} \psi\left(\frac{\al_i\cdot x_0}{\e}-\check p(\al_i),\frac{\al_i\cdot x_0-p_0(\al_i)}{\e}\right)\chi(\al_i).
\end{split}
\ee

Let $[t_1^{(\text{min})},t_1^{(\text{max})}]$ be the projection of $\Gamma_t$ onto the $t_1$-axis. Partition the projection onto the intervals of length $\Delta t_1$: $[t_1^{(m)},t_1^{(m+1)}]$, 
$t_1^{(m+1)}-t_1^{(m)}=\Delta t_1$. 
Pick one of the subintervals, e.g. the $m$-th. Let $t_1^*$ be its center, and set $\vec t^*:=(t_1^*,A(t_1^*))$, $\al^*:=H(\vec t^*)$. For convenience, denote $\bar i_j:=i_j+r_j$, $j=1,2$, and let $\mathcal I_m$ be the set of indices $\vec i$ such that $\e\bar i_1\in [t_1^{(m)},t_1^{(m+1)}]$. Rewrite the part of the sum in \eqref{fext-lt-crt} that corresponds to the indices $\vec i\in \mathcal I_m$:
\be\label{part-sum-v2}
\begin{split}
J_m(\e):&=\sum_{\vec i\in \mathcal I_m} \frac{c_{\vec i}}{\e} \frac{\fj(\al_{\vec i})}{\sqrt{\text{det}Q(\al_{\vec i})}}
\psi\left(\frac{q(\al_{\vec i})}{\e}-\check p(\al_{\vec i}),\frac{v(\al_{\vec i})}{\e}\right)\chi(\al_{\vec i})\\
&=\frac{c^*}{\e} \frac{\fj(\al^*)}{\sqrt{\text{det}Q(\al^*)}}\chi(\al^*)\sum_{\vec i\in \mathcal I_m} 
\psi\left(\frac{q(\al_{\vec i})}{\e}-\check p(\al^*),\frac{v(\al_{\vec i})}{\e}\right)+O((\dq)^2).
\end{split}
\ee
Here we have used that the sum is $O(\dq)$ uniformly as $\e\to0$, and that $c_{\vec i}=c^*(1+O(\dq))$ for all $\vec i$ such that $\vec i\in \mathcal I_m$ and $\psi(\cdot,\cdot)\not=0$, where $c^*$ is the area of the tessellation piece containing $\al^*$. By \eqref{psi-loc}, there are $O(\e \dq)/\e^2$ nonzero terms in the sum. Combined with \eqref{wghts}, we get that the sum is indeed of order $O(\dq)$.

By construction,
\be\label{part-sum-v3}
\sum_{\vec i\in \mathcal I_m} 
\psi\left(\frac{q(\al_{\vec i})}{\e}-\check p(\al^*),\frac{v(\al_{\vec i})}{\e}\right)
=\sum_{\vec i\in \mathcal I_m} 
\psi\left(\frac{\tilde q(\e \bar i_1,\e \bar i_2)}{\e}-\check p(\al^*),\frac{\tilde v(\e \bar i_1,\e \bar i_2)}{\e}\right).
\ee
The assumption $\tilde v_2'(\vec t)\not=0$, $\vec t\in\Gamma_t$, implies that the lines $t_1=\e \bar i_1,t_2\in\br$, $\e \bar i_1\in [t_1^{(m)},t_1^{(m+1)}]$ intersect $\Gamma_t$ transversely. Then
\be\label{vexp}\begin{split}
\frac{\tilde v(\e \bar i_1,\e \bar i_2)}{\e}&=\tilde v_2'(\e \bar i_1,A(\e \bar i_1))\left(\bar i_2-\frac{A(\e \bar i_1)}{\e}\right)+O(\e)\\
&=c_v^*\left(\bar i_2-\frac{A(\e \bar i_1)}{\e}\right)+O(\dq)+O(\e),\ c_v^*:=\tilde v_2'(t_1^*,A(t_1^*)),
\end{split}
\ee
where we have used that $|\bar i_2-(A(\e \bar i_1)/\e)|$ is bounded whenever $\psi\not=0$, and $|\e \bar i_1-t_1^*|=O(\dq)$. There are two big-$O$ terms in \eqref{vexp}, because $\e$ and $\dq$ are independent variables. Similarly,
\be\label{tq-transf-v2}\begin{split}
\tilde q(\e \bar i_1,\e \bar i_2)&=\tilde q_1(\e \bar i_1)+c_q^*(\e \bar i_2-A(\e \bar i_1))+O(\e\dq)+O(\e^2),\\ 
\tilde q_1(t_1)&:=\tilde q(t_1,A(t_1)),\ c_q^*:=\tilde q'_2(t_1^*,A(t_1^*)).
\end{split}
\ee
By \eqref{psi-per}, the non-zero terms in \eqref{part-sum-v3} can be written in the form
\be\label{terms}\begin{split}
&\psi\left(\frac{\tilde q(\e \bar i_1,\e \bar i_2)}{\e}-\cp(\al^*),\frac{\tilde v(\e \bar i_1,\e \bar i_2)}{\e}\right)\\
&=\psi_1\left(\frac{\tilde q_1(\e \bar i_1)}{\e}-\cp(\al^*)+c_q^*\left(\bar i_2-\frac{A(\e \bar i_1)}{\e}\right),\bar i_2-\frac{A(\e \bar i_1)}{\e}\right)+O(\dq)+O(\e)\\
&=\psi_1\left(\left\{\frac{\tilde q_1(\e \bar i_1)}{\e}-\cp(\al^*)+c_q^*\left(\bar j_2-\left\{\frac{A(\e \bar i_1)}{\e}\right\}\right)\right\},\bar j_2-\left\{\frac{A(\e \bar i_1)}{\e}\right\}\right)\\
&\qquad+O(\dq)+O(\e),
\end{split}
\ee
where we denoted
\be\label{j2def}
\psi_1(q,\tau):=\psi(q,c_v^*\tau),\ 
\bar j_2:=\bar i_2-\left[\frac{A(\e \bar i_1)}{\e}\right].
\ee
Because of \eqref{psi-loc}, there are finitely many values of $\bar j_2$ for which $\psi$ in \eqref{terms} is non-zero.

For a fixed value of $\bar j_2$, the effect of the term $-\cp(\al^*)+c_q^*\bar j_2$ in the first argument of $\psi_1$ is just a shift that does not affect the u.d. property. So it can be ignored if we show that the points 
\be\label{stp1}
\left(\left\{\frac{\tilde q_1(\e \bar i_1)}{\e}-c_q^*\left\{\frac{A(\e \bar i_1)}{\e}\right\}\right\},-\left\{\frac{A(\e \bar i_1)}{\e}\right\}\right)
\ee
are u.d. in the square $[0,1]\times [-1,0]$ as $\e\to0$. Corollary~\ref{cor:udpts-2d-e} and Assumption \eqref{zeromeas} imply that the set
\be\label{stp2}
\left(\left\{\frac{\tilde q_1(\e \bar i_1)}{\e}\right\},-\left\{\frac{A(\e \bar i_1)}{\e}\right\}\right)
\ee
is u.d. in $[0,1]\times [-1,0]$ as $\e\to0$. Then, Lemma~\ref{lem:xpy} implies the desired result.


Pick one value of $\bar j_2$. The arguments of $\psi_1$ on the last line in \eqref{terms} are u.d. over the rectangle $[0,1]\times[\bar j_2-1,\bar j_2]$ with area $1$. There are $N_1(\e):=\dq/\e$  distinct values of $i_1$ such that $\e \bar i_1\in [t_1^{(m)},t_1^{(m+1)}]$. Thus, the area per each value is $1/N_1(\e)$. Summation over $\bar j_2$ just shifts this rectangle along the second variable. These rectangles are stacked precisely one on top of the other without overlap. Hence
\be\label{sum-lim}\begin{split}
\lim_{\e\to0}\frac1{N_1(\e)}\sum_{\vec i\in \mathcal I_m} 
\psi\left(\frac{q(\al_{\vec i})}{\e}-\cp(\al_{\vec i}),\frac{v(\al_{\vec i})}{\e}\right)
&=\int_{\br}\int_{0}^{1} \psi_1(q,\tau)dq d\tau+O(\dq)\\
&=\frac1{|c_v^*|}\int_{\br}\int_{0}^{1} \psi(q,v)dqdv+O(\dq).
\end{split}
\ee
Using \eqref{int-psi-st1} and then A5 gives:
\be\label{psi-crt-int}
\int_{\br}\int_0^1 \psi(q,v)dq dv=\int_{\br}\varphi(v)dv=1.
\ee
From \eqref{part-sum-v2}, \eqref{part-sum-v3}, \eqref{terms}, \eqref{sum-lim}, and \eqref{psi-crt-int} it follows that we have to evaluate the following expression:
\be\label{kappa-def}
\frac{c^*}{\e}\frac{N_1(\e)}{|c_v^*|}=\frac{c^*}{\e^2}\frac{\Delta t_1}{|c_v^*|}=\left(\dt\left.\left(\frac{\pa\al}{\pa\vec t}\right)\right|_{\vec t=\vec t^*}+O(\e)\right)\frac{\dq}{|c_v^*|}.
\ee
Here we have used that the elementary area in the $t_1,t_2$-coordinates is $\e^2$, and the area of the corresponding piece on $S^2$ is $c^*$. 
Combining the above results gives
\be\label{part-sum-st3}
\lim_{\e\to0}J_m(\e)=\chi(\al^*)\frac{\fj(\al^*)}{\sqrt{\text{det}Q(\al^*)}}\dt\left.\left(\frac{\pa\al}{\pa\vec t}\right)\right|_{\vec t=\vec t^*}\frac{\dq}{|c_v^*|}+O(\dq^2).
\ee
Next we sum $J_m(\e)$ for all $m$. Since $\dq>0$ can be arbitrarily small, we conclude from \eqref{part-sum-st3} by using \eqref{vexp} that  
\be\label{lead-trm-crt}\begin{split}
\lim_{\e\to0} f_{\chi,\e}^{(1)}(x_0) &= -\frac1{2\pi}\int_{t_1^{(\text{min})}}^{t_1^{(\text{max})}}\chi(\al)\frac{\fj(\al)}{\sqrt{\text{det}Q(\al)}}\dt\left(\frac{\pa\al}{\pa\vec t}\right)\frac1{|\tilde v_2'(t_1,A(t_1))|}dt_1\\
&=-\frac1{2\pi}\int_{U_t}\chi(\al)\frac{\fj(\al)}{\sqrt{\text{det}Q(\al)}}\dt\left(\frac{\pa\al}{\pa\vec t}\right)\frac{\de(t_2-A(t_1))}{|\tilde v_2'(t_1,A(t_1))|}d\vec t\\
&=-\frac1{2\pi}\int_{U_t}\chi(\al)\frac{\fj(\al)}{\sqrt{\text{det}Q(\al)}}\dt\left(\frac{\pa\al}{\pa\vec t}\right)\de(\tilde v(\vec t))d\vec t\\
&= -\frac1{2\pi}\int_{\Omega}\chi(\al)\frac{\fj(\al)}{\sqrt{\text{det}Q(\al)}} \de(\al\cdot x_0-p_0(\al))d\al.
\end{split}
\ee

The result in \eqref{lead-trm-crt} is, of course, expected, since this is what one gets in the continuous case by substituting $\chi(\al)$ and $g^{(1)}(\al,p)$ (cf. \eqref{g1g2}) into \eqref{invf-cont-crt}. The same result for $g^{(2)}(\al,p)$, which leads to $f_{\chi,\e}^{(2)}$, follows immediately, since the second order derivative or $g^{(2)}(\al,p)$ with respect to $p$ is piecewise-smooth and bounded on $S^2$. In this case the contribution of the terms in an $\e$-neighborhood of $\Gamma$ goes to zero as $\e\to0$.

Next we discuss how the above derivation changes when $\xh$ replaces $x_0$ in \eqref{fext-lt-crt}. Clearly, it is sufficient to consider only the leading singular term $g^{(1)}(\al,p)$. The analogue of \eqref{new-vars} becomes
\be\label{new-vars-xh}
q^{(h)}:=\al\cdot x_0+(\al\cdot\Theta_0)\e h,\ v^{(h)}:=\al\cdot x_0+(\al\cdot\Theta_0)\e h-p_0(\al).
\ee
From \eqref{tq-transf-v2} and \eqref{new-vars-xh}, we find
\be\label{tq-transf-new}\begin{split}
\tilde q^{(h)}(\e \bar i_1,\e \bar i_2)=&\tilde q(\e \bar i_1,\e \bar i_2)+(\al(\e \bar i_1,\e \bar i_2)\cdot\Theta_0)\e h\\
=&\tilde q(\e \bar i_1,\e \bar i_2)+(\al^*\cdot\Theta_0)\e h+O(\e\dq),\\
\tilde v^{(h)}(\e \bar i_1,\e \bar i_2)=&\tilde v(\e \bar i_1,\e \bar i_2)+(\al^*\cdot\Theta_0)\e h+O(\e\dq).
\end{split}
\ee
Hence the analogue of \eqref{terms} becomes:
\be\label{part-sum-h}
\begin{split}
&\psi\left(\frac{\tilde q^{(h)}(\e \bar i_1,\e \bar i_2)}{\e}-\cp(\al^*),\frac{\tilde v^{(h)}(\e \bar i_1,\e \bar i_2)}{\e}\right)\\
&=\psi_1\left(\frac{\tilde q_1(\e \bar i_1)}{\e}+(\al^*\cdot\Theta_0)h-\cp(\al^*)+c_q^*\left(\bar i_2-\frac{A(\e \bar i_1)}{\e}\right)\right.,\\
&\hspace{2cm}\left.\bar i_2-\frac{A(\e \bar i_1)}{\e}+\frac{(\al^*\cdot\Theta_0)h}{c_v^*}\right)+O(\dq).
\end{split}
\ee
The new arguments of $\psi_1$ are obtained from the old ones by a constant shift. Clearly, this does not affect the averaging argument in \eqref{j2def}--\eqref{part-sum-st3}. The interval $[t_1^{(\text{min})},t_1^{(\text{max})}]$ needs to be enlarged slightly to account for the dependence of $\Gamma_t$ on $h$. The derivation works the same way, and we get the same result as in \eqref{lead-trm-crt} even if $\xh$ replaces $x_0$ on the left side of \eqref{lead-trm-crt}. Thus, we proved the following result.

\begin{theorem}\label{thm:remote} Pick a generic point $x_0\not\in S$. For any $\Theta_0\in S^2$, $h$ confined to a bounded set, and any $\chi\in C^\infty(S^2)$ one has 
\be\label{res-remote}
\lim_{\e\to0}f_{\chi,\e}(x_0+\e h\Theta_0)=-\frac1{4\pi^2}\int_{S_+^2}\left. \chi(\al)(\pa/\pa p)^2 g(\al,p)\right|_{p=\al\cdot x} d\al.
\ee
\end{theorem}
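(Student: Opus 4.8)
The plan is to assemble the theorem from the decomposition and uniform-distribution analysis developed above. By linearity, writing $g=g^{(1)}+g^{(2)}$ as in \eqref{g1g2} splits the localized reconstruction into $f_{\chi,\e}=f_{\chi,\e}^{(1)}+f_{\chi,\e}^{(2)}$, and I would treat the two pieces separately. For $g^{(2)}$ the second $p$-derivative is piecewise-smooth and bounded near $\Gamma$, so convergence of $f_{\chi,\e}^{(2)}$ to its continuous counterpart follows by the same Riemann-sum comparison already used for $f_\e^{(3)}$ in \eqref{f3-crt-st1}--\eqref{f3-crt-st2}; the singular contribution concentrated in an $\e$-neighborhood of $\Gamma$ occupies vanishing measure and drops out in the limit.

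The core of the argument is $f_{\chi,\e}^{(1)}$, represented by the $\psi$-sum in \eqref{fext-lt-crt}. First I would partition the projection of $\Gamma_t$ onto the $t_1$-axis into subintervals of length $\dq$ and freeze the slowly-varying prefactors at the center $\vec t^*$ of each piece, reducing the local sum to $J_m(\e)$ in \eqref{part-sum-v2}. Using the transversality $\tilde v_2'\neq0$ together with the Taylor expansions \eqref{vexp} and \eqref{tq-transf-v2}, I would recast the arguments of $\psi$ in terms of fractional parts, as in \eqref{terms}, so that for each fixed $\bar j_2$ the contribution becomes an average of $\psi_1$ over the points \eqref{stp1}.

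The decisive step is to show that the points \eqref{stp1} are uniformly distributed in $[0,1]\times[-1,0]$ as $\e\to0$. Here the genericity condition \eqref{zeromeas} on $f(t_1)=M_1\tilde q(t_1,A(t_1))+M_2 A(t_1)$ supplies exactly the hypotheses required: Corollary~\ref{cor:udpts-2d-e} yields u.d.\ of the points \eqref{stp2}, and Lemma~\ref{lem:xpy} upgrades this to u.d.\ of \eqref{stp1} after the linear shear by $c_q^*$ in the first coordinate. Granting this, the u.d.\ property converts each local sum into the integral in \eqref{sum-lim}, whose value is $1$ by \eqref{psi-crt-int}. Collecting the Jacobian factors \eqref{kappa-def}, passing to the limit in \eqref{part-sum-st3}, summing over $m$, and letting $\dq\to0$ produces the Riemann integral \eqref{lead-trm-crt}, which is precisely the continuous reconstruction \eqref{invf-cont-crt} with $\chi$ inserted. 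Finally, the extension from $x_0$ to $\xh=x_0+\e h\Theta_0$ costs nothing: the substitution \eqref{new-vars-xh} alters $q,v$ only by the term $(\al\cdot\Theta_0)\e h$, which to leading order equals the $\vec i$-independent constant $(\al^*\cdot\Theta_0)\e h$, so by \eqref{part-sum-h} both arguments of $\psi_1$ are shifted by constants and the u.d.\ averaging is untouched.

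The main obstacle is the equidistribution step of the third paragraph. The relevant points live in a strip of transverse width $O(\e)$ about $\Gamma$, so the domain shrinks while the number of points grows, and the points themselves depend on $\e$ through the scaling $\tilde q_1(\e\bar i_1)/\e$; consequently no fixed-sequence u.d.\ result applies, and one must invoke the $\e$-dependent statements of Corollary~\ref{cor:udpts-2d-e} and Lemma~\ref{lem:xpy}. Condition \eqref{zeromeas} is precisely what rules out the degenerate behavior—flat arcs on which $f''\equiv0$ carrying a \emph{rational} slope $f'$—that would otherwise destroy equidistribution on the shrinking set, while also guaranteeing that such arcs form a structurally tame set of finitely many points and non-intersecting intervals.
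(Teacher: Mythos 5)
Your proposal is correct and follows essentially the same route as the paper's own proof: the same splitting $f_{\chi,\e}=f_{\chi,\e}^{(1)}+f_{\chi,\e}^{(2)}$ from \eqref{g1g2}, the same localization of $\Gamma_t$ into $\dq$-intervals with frozen prefactors leading to \eqref{part-sum-v2}, the same reduction of the $\psi$-sums via \eqref{vexp}--\eqref{terms} to equidistribution of the points \eqref{stp1}, established through Corollary~\ref{cor:udpts-2d-e} and Lemma~\ref{lem:xpy} under condition \eqref{zeromeas}, and the same constant-shift argument \eqref{new-vars-xh}--\eqref{part-sum-h} for passing from $x_0$ to $\xh$. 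There are no substantive differences to report.
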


Combining \eqref{f1-st4-crt}, \eqref{int-psi-st1}, \eqref{f3-crt-st2}, \eqref{part2-lim}, \eqref{twoterms}, and \eqref{res-remote}, we finish the proof of Theorem~\ref{thm:main}.

\section{Numerical experiments}\label{numerix}

We start by constructing an interpolation kernel $\ik$ with the required properties. We use the method of \cite{btu03}. In \cite{btu03} the authors construct piecewise-polynomial kernels $\kod$, which are characterized by a quadruple of numbers $\{N,W,R,L\}$. Here 
\begin{enumerate}
\item $N$ is the maximal degree of  polynomial pieces that make up $\kod$, 
\item $W$ is the support: $\text{supp}(\kod)=[0,W]$, 
\item $R$ is regularity, i.e. $\kod$ is $R$ times continuously differentiable, and
\item $L$ is order, i.e. the approximation error behaves like $O(\e^L)$, where $\e$ is step-size.
\end{enumerate}
Additional requirements, e.g. that $\kod$ be symmetric and interpolating can be imposed too. 

Assumption A3 implies that $\kod\in C^2(\br)$. Since $\kod$ is piecewise-polynomial, $\kod'''$ is automatically piecewise continuous and bounded, so Assumptions A3 and A4 are satisfied if $R=2$. As is easy to see, Assumption A1 is satisfied if $L=3$. Assumption A2 means that $W < \infty$. Hence we should have $R=2$, $L=3$, and $W < \infty$. The construction in \cite{btu03} involves a number of free parameters (degrees of freedom) $P$. In our case there are
\be\label{deg-fred}
P=(N-R)(W-L)+L-R-1=(N-2)(W-3)
\ee
degrees of freedom. The requirement that $\kod$ be interpolating: $\ik((W/2)+n)=\delta_n$ imposes $W$ (respectively, $W-1$) additional conditions if $W$ is odd (respectively, even). Consequently, $(N-2)(W-3)\ge W$ (resp., $W-1$) if $W$ is odd (resp., even). Hence, $N\ge 4$ and $W\ge 4$. Choosing $N=4$, gives the minimal acceptable value $W=6$. Thus, $\kod$ can be selected in the class $\{N=4,W=6,R=2,L=3\}$. 

Imposing the condition that $\kod$ be symmetric and using equations (20), (22), and (23) in \cite{btu03} implies that $\kod$ can be represented in the form
\be\label{1st-form}
\varphi(t)=a_1(B_3(t)+B_3(6-t)) +a_2B_3(t-1)+a_3(B_4(t)+B_4(t-1)),
\ee  
where $a_{1,2,3}$ are some constants, and $B_n$ denotes the cardinal B-spline of degree $n$ supported on $[0,n+1]$. Finding $a_{1,2,3}$ by solving the system $\ik((W/2)+n)=\delta_n$ and simplifying gives the final expression
\be\label{final-form}
\ik(t)=0.5(B_3(t)+B_3(t-2))+4B_3(t-1)-2(B_4(t)+B_4(t-1)).
\ee  
Using the properties of B-splines, we see that $\int_{\br}\ik(t)dt=1$.

The test object consists of two balls with centers $c_1=(0,0,-5)$ and $c_2=(-5.52,0,-7.36)$. Each ball has radius $R=4$ and uniform density 1. The point on the boundary $x_0$, in a neighborhood of which we study resolution, is given by
\be\label{point}
x_0=c_1-R\Theta_0,\ \Theta_0=-(\cos(0.7\pi)\sin(0.2\pi),\sin(0.7\pi)\sin(0.2\pi),\cos(0.2\pi)).
\ee
In agreement with our convention, $\Theta_0$ points into the interior of the first ball. The second ball is chosen in such a way that the plane through $x_0$ and normal to $\Theta_0$ is tangent to its boundary. This way we demonstrate that remote singularities do not contribute to edge response.

To simulate discrete data, Radon transform is computed at the points 
\be\label{datapts}\begin{split}
\al_{\vec i}&=(\cos\theta_{i_1}\sin\ga_{i_2},\sin\theta_{i_1}\sin\ga_{i_2},\cos\ga_{i_2}),\
p_j=-10+0.04j,0\le j\le 500,\\
\theta_{i_1}&=\frac{2\pi}{500}i_1,\ 0\le i_1\le 500,\ \ga_{i_2}=\frac{\pi}{500}i_2,\ 0\le i_2\le 500.
\end{split}
\ee
Thus, $\e=\Delta p=0.04$. The predicted response is computed using \eqref{main-res}, where $f_0=\rho_0=1$. The results are shown in Figure~\ref{fig:04}. The $x$-axis in the figure shows the $h$-values. The $y$-axis shows the reconstructed values at the points $x_h=x_0+\e h\Theta_0$ (cf. \eqref{rec-pt-crt}) as well as the predicted transition curve (edge response). The results show a good match.

\begin{figure}[h]
{\centerline{\epsfig{file=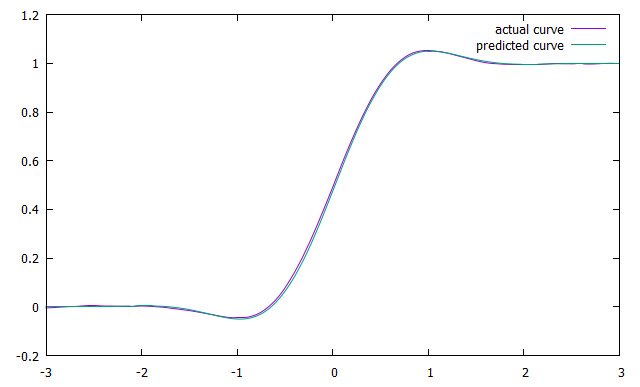, width=10cm}}
}
\caption{Comparison of the predicted and actual transition curves for $\e=0.04$.}
\label{fig:04}
\end{figure}

Next we illustrate the situation where condition \eqref{Hgrad} does not hold and Theorem~\ref{thm:main} does not hold either. The test object consists of a single ball of radius 1 with center $c=(0,0,0)$ in the first experiment, and center $c=(0,0,1)$ - in the second experiment. The point on the boundary $x_0$ is now given by
\be\label{point-v2}
x_0=c-R\Theta_0,\ \Theta_0=(-1,0,0).
\ee
As is easily seen, in both cases the components of the vector $\mathcal H_0'$ (cf. \eqref{hprime}) are integers, so $x_0$ is not generic. The results are shown in Figure~\ref{fig:alt}. The plot on the left is for the case $c=(0,0,0)$, and on the right - for the case $c=(0,0,1)$. The match between the predicted and actual transition behaviors is no longer accurate.

\begin{figure}[h]
{\centerline{
\hbox{
\epsfig{file=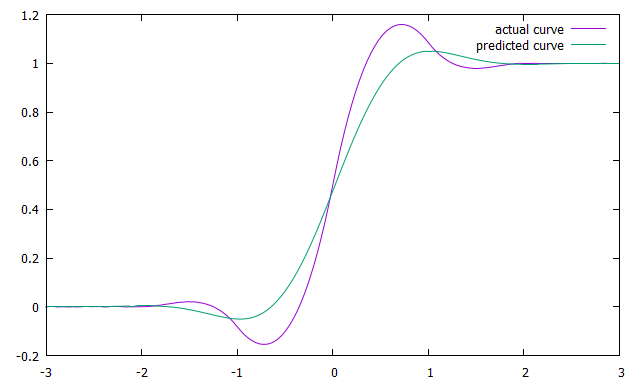, width=6.4cm}}
{\epsfig{file=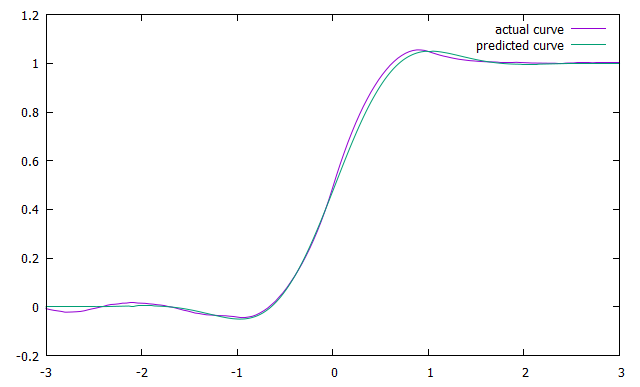, width=6.4cm}}
}}
\caption{Comparison of the predicted and actual transition curves for $\e=0.04$. Left panel: the center is at $c=(0,0,0)$, right panel: the center is at $c=(0,0,1)$. The match is not accurate, because $x_0$ is not generic.}
\label{fig:alt}
\end{figure}

\appendix
\section{Auxiliary results}\label{sex:auxres}

\begin{lemma}\label{lem:udpts-e} Given an interval $[a,b]$, consider a real-valued function $f\in C^2([a,b])$ such that either (1) $f''\equiv 0$ on $[a,b]$ and $f'$ equals to an irrational number, or (2) 
$f''\not=0$ on $[a,b]$, and 
\be\label{incl-e}
n< f'(a),f'(b) < n+1
\ee
for some $n\in\mathbb Z$. Then the points $\{f(\e i)/\e\}$, $\e i\in[a,b]$, are u.d. as $\e\to0$. 
\end{lemma}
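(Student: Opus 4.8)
The plan is to verify Weyl's criterion for the net of points $x_i^{(\e)}:=f(\e i)/\e$ indexed by $i\in I_\e:=\{i\in\mathbb Z:\ \e i\in[a,b]\}$, whose cardinality is $N(\e)=\#I_\e=(b-a)/\e+O(1)\to\infty$. Writing $e(x):=\exp(2\pi\sqrt{-1}\,x)$, it is enough to show (cf. \cite{KN_06}, where Weyl's criterion is stated for such families indexed by a parameter, exactly as is used repeatedly elsewhere in this paper) that for every nonzero integer $m$,
\[
\frac{1}{N(\e)}\,S_m(\e)\to 0,\qquad S_m(\e):=\sum_{i\in I_\e} e\!\left(m\,\frac{f(\e i)}{\e}\right),\qquad \e\to0 .
\]

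In Case (1), $f''\equiv0$ forces $f(t)=ct+d$ with $c$ irrational, so $mf(\e i)/\e=mc\,i+md/\e$ and $S_m(\e)=e(md/\e)\sum_{i\in I_\e}e(mc\,i)$ is (up to a unimodular factor) a geometric sum. Since $mc\notin\mathbb Z$, its modulus is at most $|\sin(\pi mc)|^{-1}$, a constant independent of $\e$; dividing by $N(\e)\to\infty$ gives the claim.

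In Case (2), $f''\in C([a,b])$ never vanishes, hence keeps a constant sign and satisfies $c_0\le|f''|\le C_0$ on $[a,b]$ for some $0<c_0\le C_0$. (Monotonicity of $f'$ together with \eqref{incl-e} moreover confirms $f'(t)\in(n,n+1)$ throughout, though only the two-sided bound on $|f''|$ will be used.) I would apply the classical van der Corput second-derivative estimate for exponential sums to the phase $F(i):=mf(\e i)/\e$, viewed as a function of the integer variable $i$. One computes $F'(i)=mf'(\e i)$ and $F''(i)=m\e f''(\e i)$, so $F''$ has constant sign with $|m|c_0\e\le|F''|\le|m|C_0\e$, i.e. $|F''|\asymp\e$, while there are $N(\e)\asymp\e^{-1}$ terms. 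The second-derivative test then yields
\[
|S_m(\e)|\ \ll\ N(\e)\,\e^{1/2}+\e^{-1/2}\ =\ O(\e^{-1/2}),
\]
with implied constant depending only on $m$, $c_0$, $C_0$, and $|f'(b)-f'(a)|$. Since $N(\e)\asymp\e^{-1}$, we get $|S_m(\e)|/N(\e)=O(\e^{1/2})\to0$, which is Weyl's criterion.

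The argument is not deep; the one point requiring care is the bookkeeping of powers of $\e$ in Case (2). The summands number about $\e^{-1}$, yet the curvature $|F''|$ is small, of order $\e$, so a priori one might fear no cancellation; the van der Corput bound shows that this weak curvature still produces square-root cancellation, $|S_m(\e)|=O(\e^{-1/2})=O(\sqrt{N(\e)})$, which is exactly enough to beat the normalization $1/N(\e)$. I would double-check the exact constants against a standard reference, but the order of magnitude is robust across the usual formulations of the estimate (e.g. $|S|\ll (b-a)\lambda_2^{1/2}+\lambda_2^{-1/2}$ with $\lambda_2\asymp\e$, or the variation-type bound $|S|\ll(|F'(B)-F'(A)|+1)\lambda_2^{-1/2}$), both of which give $O(\e^{-1/2})$ here. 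It is worth emphasizing that the localization $f'(t)\in(n,n+1)$ coming from \eqref{incl-e} is not essential for this proof of u.d.; the decisive hypothesis is $f''\ne0$, which supplies the curvature that the constant-slope Case (1) instead replaces by the irrationality of $f'$.
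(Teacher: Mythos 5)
Your proposal is correct, but it takes a genuinely different route from the paper in Case (2). The paper proves the needed cancellation from scratch: it splits $[a,b]$ into $M$ subintervals on which $Mf'$ varies by less than one (this is exactly where hypothesis \eqref{incl-e} enters, via the distance-to-integer estimate \eqref{disttoint-e}), linearizes the phase on blocks of length $\e^{2/3}$, and then applies only the elementary linear exponential-sum bound \eqref{keyineq-e}, arriving at the rate $O\left(\e^{1/3}\ln(1/\e)\right)$. You instead invoke the classical van der Corput second-derivative test as a black box for the phase $F(i)=mf(\e i)/\e$, with $|F''|\asymp|m|\e$ over $\asymp\e^{-1}$ integers, which gives $|S_m(\e)|=O(\e^{-1/2})$ and hence the sharper normalized rate $O(\e^{1/2})$. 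In effect, the paper's argument is a self-contained derivation of (a weak form of) that very test adapted to this scaling, so the underlying mechanism --- curvature forcing square-root-type cancellation --- is the same, but the trade-offs differ: the paper stays elementary and independent of external estimates at the cost of the extra subdivision machinery and the hypothesis \eqref{incl-e}, while your proof is shorter, quantitatively stronger, and correctly observes that \eqref{incl-e} is superfluous for Case (2) (it is an artifact of the paper's method of bounding $\Vert Mf'(t_k)\Vert$, not an intrinsic requirement for uniform distribution); your Case (1) is essentially identical to the paper's, which simply cites it as well known.
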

\begin{proof} Case (1) is well-known (see \cite{KN_06}), so we only consider case (2). Using Weyl's criterion, we wish to show that 
\be\label{wlcr-e}
J:=\e\sum_{\e i\in[a,b]}e(Mf(\e i)/\e)\to 0,\ \e\to0,
\ee
for all integers $M\not=0$, where $e(t):=\exp(2\pi i t)$. Let $M\ge 1$ be fixed in what follows. The case $M\le -1$ can be reduced to the case $M\ge 1$ by complex conjugation.

Without loss of generality, we may assume that $f''>0$ and $f'(a)<f'(b)$. Let $a_m$ be the points such that 
\be\label{derpart-e}\begin{split}
f'(a_m)=f'(a)+(f'(b)-f'(a))\frac mM,\ 0\le m\le M.
\end{split}
\ee
Suffice it to show that
\be\label{wlcrm-e}
J_m:=\e\sum_{\e i\in[a_m,a_{m+1}]}e(Mf(\e i)/\e)\to 0,\ \e\to0,
\ee
for each $m$,\ $0\le m<M$.

Partition $[a_m,a_{m+1}]$ into the intervals of length $\e^{2/3}$:
\be\label{intervs-e}
\Delta_k:=[t_k,t_{k+1}],\ t_k:=a_m+k\e^{2/3},\ 0\le k<\Delta \e^{-2/3},\ \Delta:=a_{m+1}-a_m.
\ee
By assumption, $f''(t)$ is bounded away from zero on $[a,b]$, so each subinterval $\Delta_k$ contains $O\left(\e^{-1/3}\right)$ points from the original set $\e i$. 

First, we show that
\be\label{frst-est-e}\begin{split}
I:&=\left|\e^{1/3}\sum_{\e i\in \Delta_k}e\left(M\frac{f(\e i)}\e\right)
-\e^{1/3}\sum_{\e i\in \Delta_k}e\left(M\frac{f(t_k)+f'(t_k)(\e i-t_k)}\e\right)\right|
\to 0,\\ 
\e&\to0.
\end{split}
\ee
Since
\be\label{taylor2-e}
f(\e i)=f(t_k)+f'(t_k)(\e i-t_k)+O(\e^{4/3}),\ \e i\in\Delta_k,
\ee
it follows from \eqref{frst-est-e}
\be\label{Ismall-e}
I\le \e^{1/3}\sum_{\e i\in \Delta_k}\left|e\left(O\left(\e^{1/3}\right)\right)-1 \right|=
O\left(\e^{1/3}\right).
\ee
The big-$O$ terms in \eqref{taylor2-e} and \eqref{Ismall-e} are uniform in $k$, because $\max_{t\in[a,b]}f''(t)<\infty$ uniformly as $\e\to0$.

To prove \eqref{wlcrm-e}, partition the sum and then use \eqref{frst-est-e}, \eqref{Ismall-e} to obtain an estimate:
\be\label{Jest1-e}\begin{split}
|J_m|&\le \e \sum_{k=0}^{\Delta \e^{-2/3}}\left|\sum_{\e i\in \Delta_k}e\left(M\frac{f(t_k)+f'(t_k)(\e i-t_k)}\e\right)\right|+O\left(\e^{1/3}\right)\\
&= \e \sum_{k=0}^{\Delta \e^{-2/3}}\left|\sum_{\e i\in \Delta_k}e\left(Mf'(t_k)i\right)\right|+O\left(\e^{1/3}\right)
=:J_m^{(l)}+J_m^{(r)}+O\left(\e^{1/3}\right).
\end{split}
\ee
In $J_m^{(l)}$, the sum is over $k$ such that 
\be\label{left-e}
f'(t_k)-f'(a_m)<f'(a_{m+1})-f'(t_k).
\ee
In $J_m^{(r)}$, the sum is over $k$ such that 
\be\label{right-e}
f'(t_k)-f'(a_m)>f'(a_{m+1})-f'(t_k).
\ee
Each of the sums is over $O(\e^{-2/3})$ consequtive values of $k$. In particular, the number of terms in each sum is bounded from below by $c\e^{-2/3}$ for all $\e>0$ sufficiently small, where $c>0$ is some constant.

Consider $J_m^{(l)}$. We use the inequality
\be\label{keyineq-e}
\left| \sum_{i=1}^Ne(r i)\right|\le \min(N,1/(2\Vert r \Vert)),
\ee
where $\Vert r \Vert$ denotes the distance from $r$ to the nearest integer. By \eqref{incl-e}, \eqref{derpart-e}, \eqref{intervs-e}, and \eqref{left-e},
\be\label{disttoint-e}
\Vert M f'(t_k)\Vert\ge 
| M f'(t_k)-Mf'(a_m)|\ge
M\de k\e^{2/3},\ \de:=\min_{t\in[a,b]} f''(t),
\ee
for all $k$ used in $J_m^{(l)}$. 
Using \eqref{keyineq-e} in \eqref{Jest1-e} with
\be\label{consts-e}
r=Mf'(t_k),\ N=O(\e^{-1/3}),
\ee
gives
\be\label{midest-e}
\left|\sum_{\e i\in \Delta_k}e\left(Mf'(t_k)i\right)\right|\le\min\left(O(\e^{-1/3}),\frac{1}{2M\de k\e^{2/3}}\right).
\ee
This implies,
\be\label{J1est-e}\begin{split}
J_m^{(l)}&\le \e
\left( \sum_{0\le k<O(\e^{-1/3})}O(\e^{-1/3})+\sum_{O(\e^{-1/3})\le k<O(\e^{-2/3})} \frac{O(1)}{k\e^{2/3}}\right)\\
&=O\left(\e^{1/3}\ln(1/\e)\right)\to0.
\end{split}
\ee
The quantity $J_m^{(r)}$ can be estimated in a similar fashion (using \eqref{right-e} instead \eqref{left-e}), and the lemma is proven.
\end{proof}

\begin{corollary}\label{cor:udpts-v2-e} Given an interval $[a,b]$, pick a real-valued function $f\in C^2([a,b])$. Suppose that
\begin{enumerate}
\item The set $\{x\in[a,b]:\, f''(x)=0\}$ is the union of a finite number of distinct points and non-intersecting intervals, and 
\item If $f''(x)\equiv0$ on an interval, then $f'(x)$ is irrational on that interval. 
\end{enumerate}
Then the points $\{f(\e i)/\e\}$, $\e i\in[a,b]$, are u.d. as $\e\to0$.
\end{corollary}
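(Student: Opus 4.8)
The plan is to deduce the corollary from Lemma~\ref{lem:udpts-e} by partitioning $[a,b]$ into finitely many subintervals, on each of which one of the two cases of the lemma applies, and then to reassemble via the standard fact that a finite, positively-weighted union of uniformly distributed point sets is again uniformly distributed.

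First I would set up the decomposition using the two hypotheses. Since $f\in C^2([a,b])$, both $f'$ and $f''$ are bounded. By hypothesis (1) the zero set of $f''$ is a finite union of isolated points together with finitely many nondegenerate closed intervals (the flat intervals); on each flat interval $f''\equiv0$, so $f'$ is a constant, which by hypothesis (2) is irrational --- precisely the hypothesis of case~(1) of the lemma. On the complement, $f''$ is continuous and nonvanishing, hence of constant sign on each connected component, so $f'$ is strictly monotone there; being bounded and monotone, $f'$ attains each integer value at most once per component, at finitely many points overall. Let $B$ be the finite set consisting of the endpoints of the flat intervals, the isolated zeros of $f''$, the points where $f'$ takes an integer value, and the endpoints $a,b$.

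Next I would handle the two types of pieces. On each closed flat interval I apply case~(1) directly. For the monotone part, each component $(c,d)$ of $[a,b]\setminus B$ on which $f''\ne0$ carries no integer value of $f'$, so $f'((c,d))\subset(n,n+1)$ for a single integer $n$. Case~(2) of the lemma requires the strict inclusion \eqref{incl-e} at the endpoints and $f''\ne0$ on the closed interval, and both can fail exactly at the endpoints (where $f'$ may equal an integer, or where $(c,d)$ may abut a zero of $f''$). To sidestep this I fix a small $\eta>0$ and apply the lemma only on the shrunk interval $[c+\eta,d-\eta]$: there $f''$ is bounded away from zero, being a compact subset of the open region where $f''\ne0$, and $f'\in(n,n+1)$ strictly, so case~(2) yields uniform distribution of $\{f(\e i)/\e\}$ for $\e i\in[c+\eta,d-\eta]$.

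Finally I would reassemble. The flat intervals and the shrunk monotone pieces cover all of $[a,b]$ except a leftover set $R_\eta$ of total length $O(\eta)$, namely the union of the $\eta$-collars around the finitely many points of $B$. Because a piece of length $\ell$ contains $\ell/\e+O(1)$ of the points $\e i$, the fraction of points in each good piece tends to its relative length, while the fraction in $R_\eta$ tends to $|R_\eta|/(b-a)=O(\eta)$. For any $[\gamma,\delta]\subset[0,1)$, the normalized count of $i$ with $\{f(\e i)/\e\}\in[\gamma,\delta]$ is then a convex combination of the per-piece normalized counts; the good-piece contributions converge to $\delta-\gamma$ by the lemma, and the $R_\eta$ contribution is at most $O(\eta)$, so
\[
\limsup_{\e\to0}\left|\frac{\#\{i:\{f(\e i)/\e\}\in[\gamma,\delta]\}}{\#\{i:\e i\in[a,b]\}}-(\delta-\gamma)\right|=O(\eta).
\]
Letting $\eta\to0$ forces the left-hand side to vanish, which is the uniform-distribution criterion. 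The hard part is exactly the endpoint behavior in case~(2): the lemma insists on the strict, single-cell inclusion \eqref{incl-e} and on $f''\ne0$ over the \emph{closed} interval, and both degenerate precisely at the subdivision points; the $\eta$-shrinking together with the convex-combination/leftover estimate is tailored to absorb this degeneration at the cost of a controllable $O(\eta)$ error, after which only routine bookkeeping of a finite partition remains.
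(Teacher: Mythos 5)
Your proof is correct and follows essentially the same route as the paper's: excise small neighborhoods of the finitely many bad points, apply Lemma~\ref{lem:udpts-e} piecewise (case (1) on the flat intervals, case (2) on the shrunk monotone pieces), and absorb the leftover $O(\eta)$ fraction of sample points in the limit. If anything, you are slightly more careful than the paper, whose excised set $X$ consists only of the isolated zeros of $f''$ and the endpoints of the flat intervals: your set $B$ also includes the points where $f'$ takes integer values, which is genuinely needed so that each monotone piece satisfies the strict inclusion \eqref{incl-e}.
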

\begin{proof} Let $X$ be the set consisting of the isolated points where $f''(x)=0$ and of the endpoints of the intervals where $f''(x)\equiv0$. By assumption, $X$ contains finitely many points. Let $X_\de$ denote the $\de$-neighborhood of $X$. Clearly, $[a,b]\setminus X_\de$ is a finite union of non-overlapping intervals $[a_i,b_i]$ such that (1) $|b-a|-\sum_i|b_i-a_i|$ can be made as small as we like by choosing $\de>0$ small, and (2) each of the intervals $[a_i,b_i]$ satisfies the assumptions of Lemma~\ref{lem:udpts-e}. Thus the points $\{f(\e i)/\e\}$, $\e i\in \cup_i [a_i,b_i]$, are u.d. as $\e\to0$. The fraction of the remaining points can be made arbitrarily small, which proves the desired assertion.
\end{proof}

\begin{corollary}\label{cor:udpts-2d-e} Given an interval $[a,b]$, pick two functions $g,h$. Suppose the linear combination $f:=M_1g+M_2h$ satisfies the assumptions of Corollary~\ref{cor:udpts-v2-e} for any pair $M_1,M_2\in\mathbb Z$, $M_1^2+M_2^2>0$. Then the set  of points $(\{g(\e i)/\e\},\{h(\e i)/\e\})$, $\e i\in[a,b]$, is u.d. in $[0,1]\times[0,1]$ as $\e\to0$. 
\end{corollary}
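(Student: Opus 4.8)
The plan is to reduce the two-dimensional statement to the one-dimensional result of Corollary~\ref{cor:udpts-v2-e} via the multidimensional Weyl criterion. By the two-dimensional analogue of the criterion used in the proof of Lemma~\ref{lem:udpts-e} (see \cite{KN_06}), it suffices to show that for every pair of integers $(M_1,M_2)$ with $M_1^2+M_2^2>0$,
\be\label{wl-2d-plan}
\e\sum_{\e i\in[a,b]} e\!\left(M_1\frac{g(\e i)}\e + M_2\frac{h(\e i)}\e\right)\to 0,\ \e\to0,
\ee
where $e(t):=\exp(2\pi i t)$. Here the normalizing factor $\e$ replaces $1/N$, where $N$ is the number of indices $i$ with $\e i\in[a,b]$; since $N\approx(b-a)/\e$, these two normalizations differ only by the constant $b-a$, so the formulations are equivalent.

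First I would record the elementary observation that makes the whole reduction work: the integer multipliers $M_1,M_2$ allow one to drop the fractional parts inside the exponential. Indeed, $\{g(\e i)/\e\}=g(\e i)/\e-\lfloor g(\e i)/\e\rfloor$, so $M_1\{g(\e i)/\e\}$ differs from $M_1 g(\e i)/\e$ by an integer, and similarly for the $h$-term. Since $e(\cdot)$ is $1$-periodic, the two-dimensional Weyl sum for the points $(\{g(\e i)/\e\},\{h(\e i)/\e\})$ coincides with a single one-dimensional sum,
\be\label{reduce-plan}
\e\sum_{\e i\in[a,b]} e\!\left(\frac{f(\e i)}\e\right),\qquad f:=M_1 g+M_2 h.
\ee

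Next I would invoke the hypothesis. Since $M_1^2+M_2^2>0$, the combination $f=M_1 g+M_2 h$ satisfies the assumptions of Corollary~\ref{cor:udpts-v2-e} by assumption, so the one-dimensional points $\{f(\e i)/\e\}$, $\e i\in[a,b]$, are u.d. as $\e\to0$. Reading the one-dimensional Weyl criterion in the forward direction, u.d. of $\{f(\e i)/\e\}$ yields $\e\sum_{\e i\in[a,b]} e(Mf(\e i)/\e)\to0$ for every nonzero integer $M$; taking $M=1$ gives exactly the vanishing of \eqref{reduce-plan}, hence of \eqref{wl-2d-plan}. As $(M_1,M_2)$ was an arbitrary nonzero integer pair, the two-dimensional Weyl criterion is satisfied and the points $(\{g(\e i)/\e\},\{h(\e i)/\e\})$ are u.d. in $[0,1]\times[0,1]$.

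The argument is short precisely because all the analytic content is already contained in Corollary~\ref{cor:udpts-v2-e} (and ultimately in the exponential-sum estimates of Lemma~\ref{lem:udpts-e}); the only genuinely new step is the reduction of the planar Weyl sum to the single one-dimensional sum for $f=M_1g+M_2h$. I do not expect a real obstacle here, only one point requiring care: confirming that the multidimensional Weyl criterion from \cite{KN_06} applies verbatim in the $\e\to0$ finite-point-set regime used throughout this appendix, once the counting measure is normalized by $N\approx(b-a)/\e$, which is what lets the factor $\e$ stand in for $1/N$.
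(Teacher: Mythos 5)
Your proposal is correct and is essentially the paper's own argument: the paper simply cites Theorem~6.3 of \cite{KN_06} (the reduction of multidimensional u.d.\ to u.d.\ of all nontrivial integer linear combinations) together with Corollary~\ref{cor:udpts-v2-e}, and your unpacking of that reduction via the two-dimensional Weyl criterion, with the periodicity of $e(\cdot)$ absorbing the fractional parts and the hypothesis supplying u.d.\ of $\{f(\e i)/\e\}$ for $f=M_1g+M_2h$, is the same route written out in detail.
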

\begin{proof} The assertion follows immediately from \cite{KN_06}, Theorem 6.3, and Corollary~\ref{cor:udpts-v2-e}.
\end{proof}

\begin{lemma}\label{lem:xpy} Let the set $(x_\e(i),y_\e(i))$, $i\in I_\e$, be u.d. in the square $[0,1]\times[0,1]$ as $\e\to0$. Here $I_\e$ is some set of indices that depends on $\e$. Then the set $(\{x_\e(i)+a y_\e(i)\},y_\e(i))$, $i\in I_\e$, is also u.d. in $[0,1]\times[0,1]$ as $\e\to0$ for any $a\in\br$.
\end{lemma}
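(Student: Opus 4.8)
The plan is to verify Weyl's criterion for the transformed net. Writing $e(t):=\exp(2\pi i t)$ as in the proof of Lemma~\ref{lem:udpts-e}, the set $(\{x_\e(i)+a y_\e(i)\},y_\e(i))$, $i\in I_\e$, is u.d. in $[0,1]\times[0,1]$ if and only if
\[
\frac1{|I_\e|}\sum_{i\in I_\e} e\big(m\{x_\e(i)+a y_\e(i)\}+n\,y_\e(i)\big)\to 0,\quad \e\to0,
\]
for every pair of integers $(m,n)\ne(0,0)$. So the whole argument reduces to evaluating the limit of this sum.

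The key observation is that $m$ is an integer, hence $e(m\{w\})=e(mw)$ for every $w\in\br$: the fractional part disappears under the character. Consequently the displayed sum equals $\frac1{|I_\e|}\sum_{i\in I_\e} e\big(m\,x_\e(i)+(ma+n)\,y_\e(i)\big)$, which is precisely the average over the \emph{original} points $(x_\e(i),y_\e(i))$ of the function $G(x,y):=e(mx+(ma+n)y)$, a function that is continuous on the compact square $[0,1]\times[0,1]$.

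Here lies the one genuine subtlety, and the step I expect to be the main obstacle. Because $a$ is an arbitrary real parameter, the second frequency $ma+n$ need \emph{not} be an integer, so one cannot directly invoke Weyl's criterion for the original net $(x_\e(i),y_\e(i))$. Instead I would appeal to the equivalent characterization of uniform distribution by convergence of averages of continuous test functions (applied to the real and imaginary parts of $G$; see \cite{KN_06}). Since $G$ is continuous and $(x_\e(i),y_\e(i))$ is u.d. in $[0,1]\times[0,1]$, the average of $G$ converges to $\int_{[0,1]^2}G$, which factorizes as $\big(\int_0^1 e(mx)\,dx\big)\big(\int_0^1 e((ma+n)y)\,dy\big)$. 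If $m\ne0$ the first factor vanishes; if $m=0$, then $(m,n)\ne(0,0)$ forces $n\ne0$, so $ma+n=n\ne0$ and the second factor vanishes. Thus the integral is $0$ for every admissible $(m,n)$, Weyl's criterion is verified, and the transformed set is u.d., completing the proof.
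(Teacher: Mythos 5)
Your proof is correct, but it follows a genuinely different route from the paper. The paper argues directly from the definition of uniform distribution: for a target rectangle $[r_0,r_0+\Delta r)\times[y_0,y_0+\Delta y)$, its preimage under the map $(x,y)\mapsto(\{x+ay\},y)$ is a set $D$ inside the unit square whose area (a ``proof by picture'' with a measure-preserving shear taken mod $1$) equals $\Delta r\,\Delta y$; since the original net is u.d.\ and $D$ is Jordan measurable, the fraction of points falling in $D$ converges to $\Delta r\,\Delta y$, which is exactly the u.d.\ property of the transformed net. You instead verify Weyl's criterion for the transformed net, use $e(m\{w\})=e(mw)$ to remove the fractional part, and then--- correctly noting that the resulting frequency $ma+n$ need not be an integer, so Weyl's criterion for the \emph{original} net does not apply directly---invoke the equivalent characterization of u.d.\ by averages of continuous test functions to evaluate the limit as $\bigl(\int_0^1 e(mx)\,dx\bigr)\bigl(\int_0^1 e((ma+n)y)\,dy\bigr)=0$. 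Your case split ($m\neq0$ versus $m=0$, $n\neq0$) is complete, so the argument closes. What your approach buys is rigor without geometric bookkeeping: there is no need to describe the wrapped-around sheared strips or to address their Jordan measurability, and the arbitrary real shear $a$ causes no extra work. What the paper's approach buys is brevity and elementarity: it never leaves the counting definition of u.d., at the cost of an informal geometric step.
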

\begin{proof} Pick some $r_0,y_0,\Delta r,\Delta y>0$ so that $r_0+\Delta r, y_0+\Delta y\in[0,1)$. Consider the domain
\be\label{domain} 
D=\{(x,y)\subset [0,1)\times[0,1):\ \{x+y\}\in [r_0,r_0+\Delta r), y\in [y_0,y_0+\Delta y)\}. 
\ee 
As is seen from elementary geometry (proof by picture), the area of $D$ is independent of $r_0,y_0$ and equals $\Delta r\Delta y$. Since $(x_\e(i),y_\e(i))$ is u.d. in $[0,1]\times[0,1]$, the desired assertion follows.
\end{proof}

\bibliographystyle{plain}
\bibliography{bibliogr_A-K,bibliogr_L-Z}
\end{document}